\newcommand{\nc}{\newcommand}
\nc{\dmo}{\DeclareMathOperator}
\dmo{\ra}{\rightarrow}
\dmo{\N}{\mathbb{N}}
\dmo{\Z}{\mathbb{Z}}
\dmo{\R}{\mathbb{R}}
\dmo{\C}{\mathcal{C}}
\dmo{\AC}{\mathcal{AC}}
\dmo{\Mod}{Mod}
\nc{\nt}{\newtheorem}
\newtheorem{thm}{{\bf Theorem}}[section]
\newtheorem{lem}[thm]{{\bf Lemma}}
\newtheorem{corollary}[theorem]{{\bf Corollary}}
\newtheorem{prop}[thm]{{\bf Proposition}}
\numberwithin{equation}{section}
\begin{document}

\title{Small asymptotic translation lengths of pseudo-Anosov maps on the curve complex}

\author{Eiko Kin\\
  \small Department of Mathematics, Osaka University Toyonaka\\
  \small \texttt{kin@math.sci.osaka-u.ac.jp}
~\\  
~\\
  Hyunshik Shin\\
  \small Department of Mathematical Sciences, KAIST,\\
  \small \texttt{hshin@kaist.ac.kr}
}

\date{}

\newcommand{\Addresses}{{
  \bigskip
  \footnotesize

 \textsc{Department of Mathematics, Graduate School of Science, Osaka University Toyonaka,
                Osaka 560-0043, JAPAN}\par\nopagebreak
  \textit{E-mail address}: \texttt{kin@math.sci.osaka-u.ac.jp}

  \medskip

  \textsc{Department of Mathematical Sciences, KAIST,
  						291 Daehak-ro Yuseong-gu, Daejeon, 34141, South Korea }\par\nopagebreak
  \textit{E-mail address}: \texttt{hshin@kaist.ac.kr}

}}

%

\maketitle

\begin{abstract}
\noindent
Let $M$ be a hyperbolic fibered 3-manifold whose first Betti number is greater than 1 and let $S$ be
a fiber with pseudo-Anosov monodromy $\psi$. We show that there exists a 
sequence $(R_n, \psi_n)$ of fibers and monodromies contained in the fibered cone of $(S,\psi)$
such that the asymptotic translation length of $\psi_n$ on the curve complex 
$\C(R_n)$ behaves asymptotically like $1/|\chi(R_n)|^2$.
As applications, we can reprove the previous result by Gadre--Tsai
that the minimal asymptotic translation length of a closed surface of genus $g$ 
asymptotically behaves like $1/g^2$.
We also show that this holds for the cases of hyperelliptic mapping class group and
hyperelliptic handlebody group.
\end{abstract}

\vspace{1em}
\noindent
{\bf Keywords:} pseudo-Anosov, curve complex, asymptotic translation length, fibered 3-manifold,
hyperelliptic mapping class group, handlebody group

\vspace{1em}
\noindent
{\bf Mathematics Subject Classification (2010).} 57M99, 37E30

%
%

\section{Introduction}
\label{section_introduction}

Let $S_{g,n}$ be an orientable surface of genus $g$ with $n$ punctures.
We will simply denote it by $S$.
The \textit{mapping class group} of $S$, denoted  $\Mod(S)$, is the group
of isotopy classes of orientation-preserving homeomorphisms of $S$.
By the Nielsen--Thurston classification theorem, each element of $\Mod(S)$
is either periodic, reducible, or pseudo-Anosov.

For a non-sporadic surface $S$, that is, a surface with $3g-3+n \geq 2$, 
the {\it curve complex} $\C(S)$ is defined to be a simplicial complex whose vertex set $\mathcal{C}^{0}(S)$ 
is the set of homotopy classes
of essential simple closed curves in $S$, and whose $k$-simplices are formed by $k+1$
distinct vertices whose representatives can be chosen to be pairwise disjoint.
We will restrict our attention to the $1$-skeleton $\mathcal{C}^{1}(S)$ of $\C(S)$ with path metric $d_{\C}$
by assigning each edge length $1$. 
Then $\Mod(S)$ acts on $\C(S)$ by isometry and the \textit{asymptotic translation length}
of $f \in \Mod(S)$ on $\C^{1}(S)$ is defined by 
$$\ell_{\C}(f) = \liminf_{j \ra \infty} \frac{d_{\C}(\alpha, f^j(\alpha))}{j},$$
where $\alpha$ is an essential simple closed curve in $S$. It follows from the definition that
$\ell_{\C}(f)$ is independent of the choice of $\alpha$ and that $\ell_{\C}(f^k) = k \hspace{0.3mm} \ell_{\C}(f)$ for $k \in \N$.

Masur and Minsky \cite{MasurMinsky99} showed that $f \in \Mod(S)$ is pseudo-Anosov if
and only if $\ell_{\C}(f)>0$, and Bowditch \cite{Bowditch08} proved that there exists 
a constant $m>0$ depending only on the surface $S$ such that for each pseudo-Anosov mapping
class $f$ in $\Mod(S)$, $f^k$ has an invariant geodesic axis on $\C(S)$ for some $k \leq m$. 
In other words, 	
$\ell_{\C}(f)$ is a positive rational number with bounded denominator.

For any subgroup $H < \Mod(S)$, let us denote by $L_{\C}(H)$ the minimum of
$\ell_{\C}(f)$ over all pseudo-Anosov elements $f \in H$. 
Then  $L_{\mathcal{C}}(H) \ge L_{\mathcal{C}}( \Mod(S))$. 
We also write $F \asymp G$ if there exists a universal constant $C>0$ so that
$ 1/C \leq F/G \leq C$. 
For the closed surface $S_g$ of genus $g$, Gadre and Tsai \cite{GadreTsai11} showed that
$$L_{\C}(\Mod(S_g)) \asymp \frac{1}{g^2}.$$
In fact, using the invariant train tracks constructed by Bestvina and Handel \cite{BestvinaHandel95} and
the nesting lemma by Masur and Minsky \cite{MasurMinsky99},
they obtained  in \cite{GadreTsai11} 
the lower bound of the asymptotic translation lengths 
in terms of the Euler characteristic $\chi(S_{g,n})$ of $S_{g,n}$. That is,
$$\ell_{\C}(f) \geq \frac{1}{18\chi(S_{g,n})^2+ 30 |\chi(S_{g,n})|-10n}$$ 
for any pseudo-Anosov element $f \in \Mod(S_{g,n})$. 
To obtain the upper bound, 
they use an explicit family of pseudo-Anosov mapping classes.
This family was first considered by Penner \cite{Penner91} to find small stretch factors of pseudo-Anosov
maps;
$$L_{\mathcal{C}}(\mathrm{Mod}(S_g)) \leq \frac{4}{g^2+g-4}.$$

In this paper, we describe a way of generating a sequence of pseudo-Anosov mapping classes
$\psi_n \in \Mod(S_n)$ with small asymptotic translation lengths on the curve complex.
We say that a sequence $\{ \psi_n \}$ has a \textit{small} asymptotic translation length
if $\ell_{\C}(\psi_n) \asymp 1/|\chi(S_n)|^2$,
where $\chi(S_n)$ is the Euler characteristic of the corresponding surface $S_n$ such that $|\chi(S_n)| \ra \infty$ as $n \ra \infty$.

Let $M$ be a hyperbolic fibered 3-manifold with the first Betti number $b_1(M) \geq 2$ and let 
$S \subset M$ be a fiber with pseudo-Anosov monodromy $\psi$. 
Then the assumption $b_1(M) \geq 2$ implies that there is a primitive cohomology
class $\xi_0 \in H^1(S;\Z)$ fixed by $\psi$, that is,
$\xi_0 \circ \psi_* = \xi_0$, where 
$\psi_*: H_1(S; {\Bbb Z}) \rightarrow  H_1(S; {\Bbb Z}) $. 
Let $p:\widetilde{S} \ra S$ be a $\Z$-covering
map corresponding to $\xi_0$ whose deck transformation group is generated by
$h:\widetilde{S} \ra \widetilde{S}$ and let $\widetilde{\psi}$ be a lift of 
$\psi$ to $\widetilde{S}$.
Then we have the following main theorem.

\begin{theorem}\label{thm:main}
For all sufficiently large $n$, 
$R_n = \widetilde{S}/\langle h^n\widetilde{\psi} \rangle$
is a fiber of $M$ with $|\chi(R_n)| \asymp n$ 
whose 
pseudo-Anosov monodromy $\psi_n$ satisfies 
$$ \ell_{\C}(\psi_n) \asymp \frac{1}{|\chi(R_n)|^2}.$$
\end{theorem}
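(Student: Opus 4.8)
\medskip\noindent
The plan is to treat the three assertions of the statement separately — that $R_n$ is a fiber of $M$, that $|\chi(R_n)|\asymp n$, and that $\ell_{\C}(\psi_n)\asymp 1/|\chi(R_n)|^2$ — the last one via a lower and an upper bound, the upper bound being the real point. For the first two: since $\xi_0\circ\psi_*=\xi_0$ (equivalently $\psi^*\xi_0=\xi_0$ on $H^1(S;\Z)$), the Wang exact sequence of $S\hookrightarrow M\to S^1$ shows that $\xi_0$ is the restriction of a primitive class $\eta\in H^1(M;\Z)$, while the given fibration, with fiber $(S,\psi)$, determines a primitive class $u\in H^1(M;\Z)$ lying in the interior of a fibered cone $\mathcal F$ of the Thurston norm of $M$ (Thurston, Fried). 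One identifies $(R_n,\psi_n)$ with the fiber and monodromy dual to the primitive class $\phi_n:=nu-\eta$ (for a suitable sign of $\eta$). Because $\tfrac1n\phi_n\to u\in\mathcal F$ and $\mathcal F$ is open, $\phi_n$ lies in $\mathcal F$ for all sufficiently large $n$ — exactly the hypothesis of the theorem — so Fried's theorem gives that $\psi_n$ is pseudo-Anosov and that $R_n$ is a cross-section of the one suspension (pseudo-Anosov) flow $\Phi$ of $\psi$ on $M$; and since $\|\cdot\|_T$ is linear on $\mathcal F$, $|\chi(R_n)|=\|\phi_n\|_T=n\,|\chi(S)|+O(1)\asymp n$.

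\smallskip\noindent
The lower bound needs nothing new: by the Gadre--Tsai inequality quoted above, every pseudo-Anosov map on a surface of Euler characteristic $\chi$ has asymptotic translation length at least a universal multiple of $1/\chi^2$ once $|\chi|$ is large; applied to $\psi_n$ on $R_n$, together with $|\chi(R_n)|\to\infty$, this gives $\ell_{\C}(\psi_n)\gtrsim 1/|\chi(R_n)|^2$. For the upper bound it is enough to produce, for each large $n$, an essential simple closed curve $\alpha=\alpha_n$ on $R_n$ and an integer $k_n\asymp|\chi(R_n)|^2$ with
$$d_{\C(R_n)}(\alpha,\psi_n^{k_n}\alpha)\le C$$
for some $C$ independent of $n$: iterating the triangle inequality gives $d_{\C}(\alpha,\psi_n^{mk_n}\alpha)\le mC$, hence $\ell_{\C}(\psi_n^{k_n})\le C$ and $\ell_{\C}(\psi_n)=\ell_{\C}(\psi_n^{k_n})/k_n\le C/k_n\asymp 1/|\chi(R_n)|^2$, which together with the lower bound proves the claim.

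\smallskip\noindent
To produce such $\alpha$ and $k_n$ I would use the cross-section picture: the first return of $\Phi$ to $R_n$ carries an invariant train track $\tau_n$ for $\psi_n$, obtained by ``unrolling'' an invariant train track of $\psi$ along the $\xi_0$-direction; $\tau_n$ has $\asymp|\chi(R_n)|$ branches and contains a subtrack $\tau_0$ whose combinatorial type does not depend on $n$. Take $\alpha$ carried by $\tau_0$ and follow the method of Gadre and Tsai: the intersection numbers grow as $i(\alpha,\psi_n^{j}\alpha)\asymp\lambda(\psi_n)^{j}$, with $\log\lambda(\psi_n)\asymp 1/|\chi(R_n)|$ because $1/\log\lambda$ is continuous, concave and degree-one homogeneous on $\mathcal F$ (Fried) and $\tfrac1n\phi_n\to u$; feeding this into the Masur--Minsky nesting lemma — which replaces the crude bound $d_{\C}\le 2\log_2 i+2$ by a uniformly bounded one as long as $\psi_n^{j}(\alpha)$ remains carried by a controlled splitting of $\tau_n$ — yields $d_{\C(R_n)}(\alpha,\psi_n^{j}\alpha)\le C$ for all $j$ up to $k_n\asymp|\chi(R_n)|^2$.

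\smallskip\noindent
The main obstacle is precisely this last estimate with $C$ uniform in $n$: its content is the quadratic bound $k_n\gtrsim|\chi(R_n)|^2$ on the number of iterates it takes $\psi_n$ to displace $\alpha$ a bounded amount in $\C(R_n)$, and this should be understood as two essentially independent factors of $|\chi(R_n)|$ — one from $\log\lambda(\psi_n)\asymp 1/|\chi(R_n)|$, and one from the boundedness of the normalized dilatation $|\chi(R_n)|\log\lambda(\psi_n)$, forced because all of these fibrations lie in the one fibered face of $u$ so that Fried's normalized entropy is bounded along the relevant rays. This boundedness is what makes $\tau_n$ ``long and thin'', equivalently makes the Teichm\"uller axis of $\psi_n$ spend a $1-O(1/|\chi(R_n)|)$ fraction of each period in the thin part, where the systole — hence the point of $\C(R_n)$ — is essentially constant. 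Carrying this out precisely, i.e.\ the combinatorics of the unrolled train track $\tau_n$, the nesting estimate, and the bookkeeping that keeps $C$ from depending on $n$, is the technical heart of the theorem; the norm computation, the lower bound, and the final triangle-inequality deduction are routine.
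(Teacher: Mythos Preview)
Your handling of the fibered-cone setup, the Thurston-norm computation $|\chi(R_n)|\asymp n$, and the Gadre--Tsai lower bound are all fine. The gap is the upper bound, which you do not actually prove. The route you sketch --- intersection-number growth plus the Masur--Minsky nesting lemma --- would not give the right exponent as stated: from $i(\alpha,\psi_n^j\alpha)\asymp\lambda(\psi_n)^j$ and $\log\lambda(\psi_n)\asymp 1/n$ you only keep $i$ bounded for $j\lesssim n$, which yields $\ell_{\C}(\psi_n)\lesssim 1/n$, a full factor of $n$ short. Your ``two factors of $n$'' explanation double-counts (the two conditions you list, $\log\lambda\asymp 1/|\chi|$ and $|\chi|\log\lambda$ bounded, are the \emph{same} condition), and the nesting lemma is a tool for \emph{lower} bounds on $d_{\C}$, not upper bounds --- it does not supply the missing factor. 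You yourself concede that ``carrying this out precisely \ldots\ is the technical heart,'' and indeed it is not carried out.

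The paper's argument is completely elementary and avoids all of this machinery. The idea you are missing is to exploit directly that $\psi_n$ is induced by the deck transformation $h^{-1}$, and that on $R_n$ the maps $\widetilde\psi$ and $h^{-n}$ agree. Tile $\widetilde S$ by copies $\Sigma_i$ of $S$ cut along a curve (or arc) $c$ dual to $\xi_0$; there is a fixed $k$ with $\widetilde\psi(\Sigma_0)\subset\Sigma_{-k}\cup\cdots\cup\Sigma_k$, hence $\widetilde\psi^{\,m}(\Sigma_0)\subset\Sigma_{-mk}\cup\cdots\cup\Sigma_{mk}$. Take $\alpha\subset\Sigma_0$. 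For $m=\lfloor(n-k-2)/2k\rfloor\asymp n$ the curve $h^{mk+1}\widetilde\psi^{\,m}(\alpha)$ lies in $\Sigma_1\cup\cdots\cup\Sigma_{n-k-1}$, which is disjoint from the entire $\langle h^n\widetilde\psi\rangle$-orbit of $\alpha$. But $h^{mk+1}\widetilde\psi^{\,m}$ descends to $\psi_n^{(n-k)m-1}$ on $R_n$, so $\psi_n^{(n-k)m-1}(\overline\alpha)$ is literally \emph{disjoint} from $\overline\alpha$ (working in $\AC$ if $\alpha$ is an arc, which is quasi-isometric to $\C$). Since $(n-k)m-1\asymp n^2$, the triangle-inequality step you already wrote down gives $\ell_{\C}(\psi_n)\le 1/((n-k)m-1)\asymp 1/n^2$ immediately. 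The quadratic exponent is thus the product of the period $n$ of the cyclic cover and the number $m\asymp n$ of applications of $\widetilde\psi$ needed to spread $\Sigma_0$ across a full fundamental domain --- no train tracks, no dilatation estimates, no nesting.
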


The above family of fibers in a fibered 3-manifold was first considered by McMullen
and he proved the following theorem providing short geodesics on the moduli space
when $S$ is a closed surface.

\begin{thm}[McMullen, Theorem~10.2 in \cite{McMullen00}] \label{thm:McMullen}
For all $n$ sufficiently large,
$$R_n = \widetilde{S}/\langle h^n\widetilde{\psi} \rangle$$
is a closed surface of genus $g_n \asymp n$, and $h^{-1}:\widetilde{S} \ra \widetilde{S}$
descends to a pseudo-Anosov mapping class $\psi_n \in \Mod(R_n)$ with
$$\log \lambda(\psi_n) \asymp \frac{1}{g_n},$$
where $\lambda(\psi_n)$ is the stretch factor of $\psi_n$.
\end{thm}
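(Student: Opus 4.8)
The plan is to translate the covering-space construction into the cohomology of the mapping torus $M$ of $\psi$, and then to read off the genus from the Thurston norm and the stretch factor from McMullen's Teichm\"uller polynomial. Write $\pi_1(M)=\pi_1(S)\rtimes_\psi\Z$ and let $\alpha_0\in H^1(M;\Z)$ be the fibration class with fiber $S$ and monodromy $\psi$; thus $\alpha_0$ lies in the interior of a fibered cone $\mathcal{F}\subset H^1(M;\R)$ (Thurston, Stallings, Fried). Since $\psi^*\xi_0=\xi_0$, the Wang sequence of the fibration $S\ra M\ra S^1$ shows that $\xi_0$ is the restriction to the fiber of a class $\beta\in H^1(M;\Z)$, and $b_1(M)\geq 2$ forces $\beta$ to be independent of $\alpha_0$ (indeed $\beta|_S=\xi_0\neq 0$ while $\alpha_0|_S=0$). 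The first step is to identify, for each large $n$, the closed surface $R_n=\widetilde{S}/\langle h^n\widetilde{\psi}\rangle$ with the fiber of the primitive class
$$\gamma_n := n\,\alpha_0+\beta\in H^1(M;\Z),$$
the descent of $h^{-1}$ being its monodromy $\psi_n$ (up to the usual orientation conventions). Concretely one computes $\ker(\gamma_n)\subset\pi_1(M)$ and checks that the deck group of $\widetilde{S}\ra R_n$ is generated by the element representing $h^n\widetilde{\psi}$, while the covering translation $h$ realizes the monodromy; this is the bookkeeping heart of the argument. Because $\gamma_n=n(\alpha_0+\tfrac1n\beta)$ and $\alpha_0$ is interior, $\gamma_n\in\mathcal{F}$ for all sufficiently large $n$, so each $\gamma_n$ is a genuine fibration class.

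With this identification the genus is immediate. The Thurston norm is continuous and homogeneous of degree $1$, and for fibered classes it computes the complexity of the fiber, so
$$|\chi(R_n)|=\|\gamma_n\|=n\,\bigl\|\alpha_0+\tfrac1n\beta\bigr\|\asymp n,$$
since $\|\alpha_0+\tfrac1n\beta\|\ra\|\alpha_0\|>0$. As $R_n$ is closed this reads $2g_n-2\asymp n$, i.e.\ $g_n\asymp n$.

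For the stretch factor I would invoke McMullen's Teichm\"uller polynomial $\Theta_M$ attached to the fibered face carrying $\mathcal{F}$: its specialization along a primitive integral class $\gamma\in\mathcal{F}$ has largest root $\lambda(\psi_\gamma)$, and the resulting entropy function $\mathrm{ent}(\gamma):=\log\lambda(\psi_\gamma)$ extends to a continuous, strictly positive function on the open cone $\mathcal{F}$ that is homogeneous of degree $-1$. Applying homogeneity to $\gamma_n=n(\alpha_0+\tfrac1n\beta)$ gives
$$\log\lambda(\psi_n)=\mathrm{ent}(\gamma_n)=\frac1n\,\mathrm{ent}\!\Bigl(\alpha_0+\tfrac1n\beta\Bigr),$$
and continuity at the interior class $\alpha_0$ yields $\mathrm{ent}(\alpha_0+\tfrac1n\beta)\ra\mathrm{ent}(\alpha_0)=\log\lambda(\psi)\in(0,\infty)$. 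Hence $\log\lambda(\psi_n)\sim\tfrac1n\log\lambda(\psi)$, so $\log\lambda(\psi_n)\asymp 1/n$, and combining with $g_n\asymp n$ from the previous step gives $\log\lambda(\psi_n)\asymp 1/g_n$, as claimed.

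The main obstacle is the analytic input of the third step. One must know that $\mathrm{ent}$ is a bona fide continuous, degree $-1$ homogeneous function on the open fibered cone; this is precisely the content of the Teichm\"uller polynomial, which packages the stretch factors of all fibrations in $\mathcal{F}$ into a single multivariable polynomial whose one-variable specializations vary real-analytically, so that $\log\lambda$ depends continuously on the projective class. Granting this, the rest is homogeneity bookkeeping and the Thurston-norm estimate; the one point requiring care is that the limiting direction is the interior class $\alpha_0$ (rather than a boundary class of $\mathcal{F}$, where $\mathrm{ent}$ blows up), which is automatic here because $\gamma_n$ approaches the original fibration ray. The remaining care is the covering-space identification in the first step, matching $\langle h^n\widetilde{\psi}\rangle$ with the fiber subgroup $\ker(\gamma_n)$ and $h^{-1}$ with the monodromy.
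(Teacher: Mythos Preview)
The paper does not prove this statement; it is quoted from McMullen \cite{McMullen00} and used as input (with the remark that the same argument handles punctured fibers). Your outline is in fact a faithful sketch of McMullen's own proof: identify $R_n$ with the fiber dual to the primitive integral class $\gamma_n$ lying on the affine ray through $\alpha_0$ in the direction of $\beta$, observe that these classes converge projectively to the interior point $\alpha_0$ of the fibered face so lie in $\mathcal{F}$ for large $n$, read the Euler characteristic from the Thurston norm, and read the stretch factor from the degree $-1$ homogeneity and continuity of the entropy function on the open cone (Fried; equivalently the Teichm\"uller polynomial). Up to a harmless sign in $\beta$ (which depends on orientation conventions for the deck transformation $h$), the covering-space bookkeeping you describe is exactly the computation that $\pi_1(R_n)=\ker(\gamma_n)$ via the $\Z^2$-cover $\widetilde{S}\times\R\to M$ with deck group generated by $h$ and $\widetilde{\psi}$. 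So your proposal is correct, and there is nothing in the present paper to compare it against.
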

Although McMullen dealt with closed hyperbolic 3-manifolds in Theorem~\ref{thm:McMullen}, we can adopt the same proof for the
general case of fibers of cusped hyperbolic $3$-manifolds. 
In such case, we have to say
$\log \lambda(\psi_n) \asymp 1/|\chi(R_n)|$ and $|\chi(R_n)| \asymp n$. 

\begin{figure}[tp]
\labellist \small\hair 2pt
\pinlabel $a_1$ at 32 38
\pinlabel $a_2$ at 59 46
\pinlabel $a_3$ at 95 32
\pinlabel $b_1$ at 12 34
\pinlabel $b_2$ at 80 41
\pinlabel $c$ at 75 -2
\pinlabel $\alpha_1$ at 170 30
\pinlabel $\alpha_2$ at 208 29
\pinlabel $\beta_1$ at 162 8
\pinlabel $\beta_2$ at 199 8
\endlabellist
\centering
\includegraphics[scale=1.4]{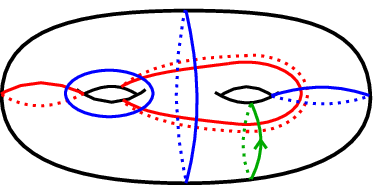}
~~~~
\includegraphics[scale=1.4]{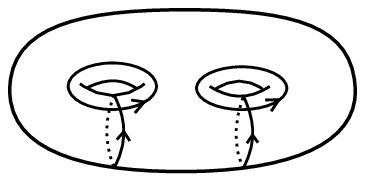}
\caption{Simple closed curves and the standard basis for $H_1(S_2;\Z)$. }
\label{figure:genus2}
\end{figure}

As a consequence of Theorem \ref{thm:main}, we can determine the behavior of minimal asymptotic translation
lengths of a few subgroups of mapping class groups.
First of all, the fact that $L_{\C}(\Mod(S_g)) \asymp 1/g^2$ also follows from
Theorem \ref{thm:main} by considering the genus 2 surface $S_2$ and any mapping class fixing a nontrivial cohomology class.
For instance, consider the mapping class 
$\psi = T_{a_1}T_{a_2}T_{a_3}T_{b_1}^{-1}T_{b_2}^{-1}$ of the closed surface of genus 2 as in 
Figure \ref{figure:genus2}, where $T_{\gamma}$ is the left-handed Dehn twist about a simple closed curve $\gamma$. 
(We apply elements of the mapping class group from right to left.) 
Then $\psi$ is pseudo-Anosov because it is coming from Penner's construction (see, for instance, \cite[Theorem 14.4]{FarbMargalit12}).
The action of $\psi$ on the first homology $H_1(S_2;\Z)$ with respect to
the basis $\{\alpha_1, \beta_1, \alpha_2, \beta_2 \}$ as in Figure \ref{figure:genus2}
is given by the matrix
$$\left(
\begin{array}{rrrr}
 2 &-1& 0 & 0\\
-1 & 1& 0 & 0\\
 0 & 0 & 1 & 1\\
 0 & 0 & 0 & 1
\end{array}
 \right).
$$
Therefore there is a 1-dimensional subspace of $H_1(S_2;\Z)$ and its dual $\xi_0 \in H^1(S_2;\Z)$
is given by the algebraic intersection number with an oriented simple closed curve $c$.
Hence, $\xi_0$ is a cohomology class fixed by $\psi$.

{\renewcommand{\arraystretch}{2}
\begin{table}[tp]
\label{table_lc}
\centering
\begin{tabular}{|l || c | c | c | }
\hline
& $\mathrm{Mod}(S_{0,n})$ & $ \mathrm{Mod}(S_{1,2n})$ & For any fixed $g \ge 2$, $\mathrm{Mod}(S_{g,n})$  \\
\hline
$L_{\mathcal{C}}( \cdot )$& $\asymp \dfrac{1}{n^2}$ ~\cite{Valdivia14} & $\asymp \dfrac{1}{n^2}$ ~\cite{GadreTsai11} &  $\asymp \dfrac{1}{n}$ ~\cite{Valdivia14} \\[4pt]
\hline
\end{tabular}
\vspace{0.5em}
\caption{Minimal asymptotic translation lengths.}
\end{table}}
Valdivia \cite{Valdivia14} showed that fixing $g \geq 2$  as $n \to \infty$, 
$$L_{\C} (\Mod(S_{g,n})) \asymp \frac{1}{n},$$
and for the remaining cases of $S_{0,n}$ and $S_{1,2n}$ with even number of punctures as $n \to \infty$,  see Table~\ref{table_lc}. 
We will determine the minimal asymptotic translation lengths of a few other types of surfaces 
including the surface of genus $1$ with odd number of punctures.
Let $D_n$ be the closed disk $D$ with $n$-punctures and 
let $\mathrm{Mod}(D_n)$ be the mapping class group of $D_n$ fixing the boundary $\partial D$ of the disk $D$ pointwise. 
As an application of Theorem~\ref{thm:main}, we have the following results.

\begin{theorem}
\label{thm:sphere-torus}
We have 
\begin{enumerate}
\item[(1)] 
 $L_{\mathcal{C}}(\Mod(D_n)) \asymp  \dfrac{1}{n^2} $, and 
 \item[(2)] 
 $L_{\mathcal{C}}(\Mod(S_{1,n})) \asymp  \dfrac{1}{n^2}$. 
\end{enumerate}
\end{theorem}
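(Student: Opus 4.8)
The plan is to realize each of the two families of surfaces as the fibers $R_n$ produced by Theorem~\ref{thm:main}, applied to a well-chosen hyperbolic fibered $3$-manifold $M$ with $b_1(M)\ge 2$, together with a matching lower bound coming from the Gadre--Tsai estimate quoted in the introduction. Since Theorem~\ref{thm:main} only gives, for each starting pair $(S,\psi)$, the single sequence $R_n=\widetilde S/\langle h^n\widetilde\psi\rangle$, the main work is to exhibit base manifolds whose fibered cone contains that sequence with the right topological type: for (1) we want $R_n$ to be (after capping or puncturing appropriately) the $n$-punctured disk, and for (2) the once-punctured torus with $n$ punctures, i.e. $S_{1,n}$.

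For part (1), I would start from the simplest hyperbolic fibered link whose monodromy is pseudo-Anosov and fixes a cohomology class: a natural candidate is the mapping class group of a punctured disk $D_m$ for small $m$ (e.g. $m=3$ or $m=4$), where a product of twists gives a pseudo-Anosov $\psi$ with $b_1$ of the mapping torus at least $2$ (each puncture contributes to $H_1$, and one arranges $\psi_*$ to fix a nonzero class dual to an arc). The $\Z$-cover $\widetilde S\to S$ corresponding to that fixed class, followed by the quotient by $h^n\widetilde\psi$, yields $R_n$; one checks that $R_n$ is planar with exactly one boundary component and $|\chi(R_n)|\asymp n$ punctures, so $R_n\cong D_{k n+c}$ for constants $k,c$. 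Then Theorem~\ref{thm:main} gives $\ell_\C(\psi_n)\asymp 1/|\chi(R_n)|^2\asymp 1/n^2$, which furnishes the upper bound $L_\C(\Mod(D_n))\lesssim 1/n^2$ (after passing through the subsequence $R_n$ and using that $\ell_\C(\psi_n^k)=k\,\ell_\C(\psi_n)$ to fill in indices, plus monotonicity/comparison arguments between $\Mod(D_n)$ and $\Mod(D_{n+1})$). The lower bound $L_\C(\Mod(D_n))\gtrsim 1/n^2$ follows from the Gadre--Tsai inequality $\ell_\C(f)\ge 1/(18\chi^2+30|\chi|-10n')$ applied with the surface $D_n=S_{0,n+1}$ (so $n'=n+1$, $\chi=-(n-1)$), together with the standard fact that capping the boundary of $D_n$ with a once-punctured disk gives a $\Mod$-equivariant embedding into $\Mod(S_{0,n+1})$ that does not increase curve-complex translation length (or equivalently, working directly with $\C(D_n)$).

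For part (2), the same strategy applies but now the base surface $S$ must be chosen so that the tower $R_n=\widetilde S/\langle h^n\widetilde\psi\rangle$ consists of once-punctured tori with growing numbers of punctures, i.e. genus stays $1$ while the puncture count grows linearly. One way is to take $S=S_{1,k}$ for small $k\ge 1$ with a pseudo-Anosov monodromy fixing a cohomology class dual to a nonseparating curve that survives in the cover; the $\Z$-cover unrolls along that curve, but the quotient $h^n\widetilde\psi$ re-glues so as to keep genus bounded while multiplying punctures. Concretely one can again use a fibered link in a Seifert-type or punctured-torus-bundle construction whose fibered face has a sequence of lattice points corresponding to $S_{1,n}$; the point of Theorem~\ref{thm:main} is precisely that \emph{any} such sequence in a fibered cone through a pseudo-Anosov point inherits $\ell_\C\asymp 1/|\chi|^2$. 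The lower bound again comes from Gadre--Tsai with $S_{1,n}$ ($\chi=-n$, $n$ punctures).

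The main obstacle is the construction step, not the estimates: one must produce, for each of the two target families, a single hyperbolic fibered $3$-manifold $M$ with $b_1(M)\ge 2$ whose Thurston norm ball has a fibered face containing a ray of lattice points realizing exactly the surfaces $D_n$ (resp.\ $S_{1,n}$) as the fibers $\widetilde S/\langle h^n\widetilde\psi\rangle$ — in particular one must verify the topological type (genus, number of boundary components/punctures) of these quotient surfaces, which amounts to a Euler-characteristic-plus-orientation-cover computation and an analysis of how $h^n\widetilde\psi$ permutes the ends of $\widetilde S$. I expect that for (1) a braid/disk model (Penner-type twists on a punctured disk) makes this transparent, and for (2) the classical relationship between punctured-torus bundles, their covers, and fibered faces does the job; once the surfaces are identified, Theorems~\ref{thm:main} and the Gadre--Tsai bound close the argument with only routine interpolation between consecutive values of $n$.
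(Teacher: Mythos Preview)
Your strategy matches the paper's: apply Theorem~\ref{thm:main} to an explicit pseudo-Anosov on a small base surface ($D_3$ for part~(1), $S_{1,2}$ for part~(2)) whose monodromy fixes a primitive cohomology class, identify the topological type of the fibers $R_n$, and invoke the Gadre--Tsai lower bound. The paper does exactly this, taking $\psi=\beta=\sigma_1^{-2}\sigma_2\in B_3$ on $D_3$ with the class dual to the arc $c_1$, so that $R_n\cong D_{2n}$ and the monodromy $\psi_n$ fixes the puncture coming from $\partial D$; and for part~(2) taking the Whitehead-link monodromy $T_m^{-1}f_\ell$ on $S_{1,2}$, which yields $R_n\cong S_{1,2n+1}$.

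The genuine gap in your proposal is the step you call ``routine interpolation between consecutive values of $n$''. There is no monotonicity of $L_{\mathcal C}$ in the number of punctures, and no natural map $\Mod(D_n)\to\Mod(D_{n\pm 1})$ that carries a pseudo-Anosov with small $\ell_{\mathcal C}$ to another such; taking powers $\psi_n^k$ changes $\ell_{\mathcal C}$ but not the underlying surface. A single application of Theorem~\ref{thm:main} thus produces fibers in only one residue class (here, one parity of the puncture count), and nothing bridges to the other. The paper handles this not by interpolation but by running the construction \emph{twice}: for part~(1) it uses $\beta$ to produce the sequence $D_{2n}$ and $\phi=\beta^2$ (whose mapping torus double-covers $M_\beta$) to produce $D_{2n+1}$; for part~(2) it uses two different pseudo-Anosovs on $S_{1,2}$ --- the Whitehead-link monodromy for $S_{1,\mathrm{odd}}$ and the Penner map $T_b^{-1}T_a$ for $S_{1,\mathrm{even}}$. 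Once you split by parity in this way, no comparison argument is needed.

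A secondary point: for part~(2) you say the $\Bbb Z$-cover ``unrolls along that curve, but the quotient $h^n\widetilde\psi$ re-glues so as to keep genus bounded''. This is correct in the examples above precisely because cutting $S_{1,2}$ along the chosen nonseparating curve $c$ yields a genus-$0$ piece (a punctured cylinder), so each copy $\Sigma_i$ contributes only punctures; it would fail for a generic choice of fixed class on a higher-genus base. You flag this verification as the main obstacle, which is accurate, but the parity issue above is the step that actually needs repair.
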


Furthermore, we improve the upper bound for the minimal asymptotic translation length for $S_g$.
The \textit{hyperelliptic mapping class group} $\mathcal{H}(S_g)$ of $S_g$  
is the subgroup of $\Mod(S_g)$ consisting of elements with representative homeomorphisms
that commute with some fixed hyperelliptic involution.

\begin{theorem}\label{prop:upperbound}
For closed surfaces $S_g$ with $g \geq 3$,
$$L_{\mathcal{C}}(\mathcal{H}(S_g)) \leq \frac{1}{g^2-2g-1},$$
and as a direct consequence, we have
$$L_{\mathcal{C}}(\mathrm{Mod}(S_g)) \leq \frac{1}{g^2-2g-1}.$$
\end{theorem}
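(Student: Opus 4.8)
The plan is to produce an explicit hyperbolic fibered $3$-manifold $M$ with $b_1(M)\ge 2$ together with a fiber $S$ and pseudo-Anosov monodromy $\psi$ such that the sequence $(R_n,\psi_n)$ coming from Theorem~\ref{thm:main} lies inside $\mathcal{H}(S_{g})$ for infinitely many $g$, and then to bound $L_{\mathcal{C}}(\mathcal{H}(S_g))$ by exhibiting, for \emph{each} $g\ge 3$, a single hyperelliptic pseudo-Anosov map with the claimed translation length bound. Concretely, I would start from a surface $S$ equipped with a hyperelliptic involution $\iota$ and a pseudo-Anosov $\psi$ commuting with $\iota$ and fixing a cohomology class $\xi_0\in H^1(S;\Z)$; the natural candidate is the genus-$2$ surface with the Penner-type map $\psi=T_{a_1}T_{a_2}T_{a_3}T_{b_1}^{-1}T_{b_2}^{-1}$ already introduced in the excerpt, whose curves $a_i,b_j$ are symmetric under the standard hyperelliptic involution, so $\psi$ is hyperelliptic, and which fixes $\xi_0=i(\cdot,c)$. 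One checks that the mapping torus $M$ is hyperbolic (Penner's construction gives pseudo-Anosov $\psi$, and one must verify $M$ is not a torus bundle, e.g.\ via the $3g-3+n\ge 2$ hypothesis) and has $b_1(M)\ge 2$ since $\psi_*$ has $1$ as an eigenvalue.

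Next I would run the construction of Theorem~\ref{thm:main}: pass to the $\Z$-cover $p:\widetilde S\to S$ dual to $\xi_0$ with deck generator $h$, lift $\psi$ to $\widetilde\psi$, and form $R_n=\widetilde S/\langle h^n\widetilde\psi\rangle$ with monodromy $\psi_n$. By Theorem~\ref{thm:main} we already know $R_n$ is a fiber with $|\chi(R_n)|\asymp n$ and $\ell_{\C}(\psi_n)\asymp 1/|\chi(R_n)|^2$. The point is to verify that $\psi_n$ is hyperelliptic: the involution $\iota$ of $S$, commuting with $\psi$ and acting compatibly on homology, lifts to an involution $\widetilde\iota$ of $\widetilde S$ that commutes with $h^n\widetilde\psi$ (after possibly composing $\widetilde\iota$ with a deck transformation), hence descends to an involution $\iota_n$ of $R_n$ commuting with $\psi_n$; one then checks $\iota_n$ is hyperelliptic, i.e.\ the quotient $R_n/\iota_n$ is a sphere, by a Riemann--Hurwitz count on the covering $R_n\to R_n/\iota_n$ using that $\iota$ was hyperelliptic on $S=S_2$. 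This gives hyperelliptic pseudo-Anosov maps on a sparse sequence of genera, so $L_{\C}(\mathcal{H}(S_{g_n}))\lesssim 1/g_n^2$ along that subsequence.

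To get the bound for \emph{every} $g\ge 3$ with the explicit constant $1/(g^2-2g-1)$, I would not rely on the $3$-manifold machinery but instead write down, for each $g$, a concrete hyperelliptic pseudo-Anosov map $\phi_g$ on $S_g$ built from Penner's recipe on a chain of curves symmetric under the hyperelliptic involution (the same family Penner used for small stretch factors, which is manifestly hyperelliptic), and estimate $\ell_{\C}(\phi_g)$ directly from above. The upper bound on $\ell_\C$ comes from exhibiting a curve $\alpha$ whose orbit under $\phi_g$ moves slowly: using the Masur--Minsky distance-versus-intersection-number estimate $d_\C(\alpha,\phi_g^j\alpha)\le 2\log i(\alpha,\phi_g^j\alpha)+2$ together with the fact that $i(\alpha,\phi_g^j\alpha)$ grows like $\lambda(\phi_g)^j$, one gets $\ell_\C(\phi_g)\le c\,\log\lambda(\phi_g)$ for a universal $c$; and Penner's family has $\log\lambda(\phi_g)\le c'/g$. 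The constant $1/(g^2-2g-1)$ then forces a more careful argument than this crude $1/g$ bound — namely one must gain an extra factor of $g$, as in Gadre--Tsai, by tracking how quickly the orbit \emph{fellow-travels} a single curve for a number of steps proportional to $g$ (the train-track carrying one gets from Penner's construction has complexity $\asymp g$, and the nesting lemma of Masur--Minsky lets the orbit stay within bounded distance for $\asymp g$ iterates before moving on). Finally, $L_{\C}(\Mod(S_g))\le L_{\C}(\mathcal{H}(S_g))$ is immediate since $\mathcal{H}(S_g)<\Mod(S_g)$, giving the stated consequence.

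\medskip

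The main obstacle I anticipate is the last, quantitative, step: turning the qualitative statement ``Penner's hyperelliptic family has small translation length'' into the \emph{explicit} inequality $\ell_\C(\phi_g)\le 1/(g^2-2g-1)$. This requires choosing the right power $\phi_g^k$ (or the right initial curve) so that a single curve in the orbit is carried by the invariant train track for exactly $\asymp g$ steps with controlled nesting, then invoking Bowditch's rationality-with-bounded-denominator result to conclude the liminf is actually attained, and finally bookkeeping the constants in the Euler-characteristic (here $|\chi(S_g)|=2g-2$) and the train-track complexity carefully enough to land on $g^2-2g-1$ rather than a worse polynomial. The hyperellipticity checks (lifting $\iota$ and the Riemann--Hurwitz count) are routine by comparison.
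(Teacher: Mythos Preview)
Your proposal has a genuine gap at exactly the point you flag: you do not have a mechanism that produces the explicit bound $1/(g^2-2g-1)$. The intersection-number estimate $d_{\C}(\alpha,\phi_g^j\alpha)\le 2\log i(\alpha,\phi_g^j\alpha)+O(1)$ combined with $\log\lambda(\phi_g)\asymp 1/g$ only yields $\ell_{\C}(\phi_g)\lesssim 1/g$, a full factor of $g$ short; the phrase ``fellow-travels for $\asymp g$ steps by the nesting lemma'' is not an argument, and Bowditch's bounded-denominator result is irrelevant to producing an upper bound. Your first approach via the genus-$2$ example and lifting $\iota$ through the $\Z$-cover also has an unchecked step (note $\iota_*=-\mathrm{id}$ on $H_1(S_2)$, so $\iota^*\xi_0=-\xi_0$ and the lift $\widetilde\iota$ conjugates $h$ to $h^{-1}$; whether it normalizes $\langle h^n\widetilde\psi\rangle$ is not automatic), and in any case you concede it covers only a sparse set of genera.

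The paper's proof is structurally quite different and avoids all of this. It never works on the closed surface directly. Instead it uses the Birman--Hilden correspondence: to bound $L_{\C}(\mathcal H(S_g))$ it suffices, by Lemma~\ref{lem_lift}, to exhibit $f\in\mathrm{Mod}(S_{0,2g+2})$ and an arc $c$ with $d_{\mathcal{AC}}(c,f^m(c))=1$ for $m=g^2-2g-1$. The map $f$ is the monodromy $\overline\psi_n$ of a fiber $S_{0,2n}$ of the \emph{magic manifold} $M_{\sigma_1^{-2}\sigma_2}$ (so $g=n-1$), and the integer $m$ is obtained not from the soft argument of Theorem~\ref{thm:main} but from an \emph{explicit} combinatorial analysis of the invariant train track $\tau_n$: one writes down its directed graph $\Gamma_n$, takes a specific arc $c\subset\mathcal N(rp_1q_1)$, and tracks through the graph that $\psi_n^{(k-1)n-k}(c)\subset\mathcal N(p_{n-k}q_{n-k}\cdots p_{n-1}q_{n-1})$ for $2\le k\le n-2$; setting $k=n-2$ gives $\psi_n^{n^2-4n+2}(c)$ disjoint from $c$. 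The substitution $n=g+1$ then yields $n^2-4n+2=g^2-2g-1$ on the nose. The missing idea in your proposal is precisely this: pass to the punctured-sphere quotient and exploit a concrete train track whose directed graph has a long ``staircase'' that lets you count iterates exactly.
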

\noindent
We remark that for $g \geq 4$, this is a sharper upper bound than Gadre--Tsai's.

As another application, we determine the asymptotes of minimal asymptotic translation
lengths for handlebody groups and hyperelliptic handlebody groups.
Let $\mathbb{H}_g$ be the handlebody of genus $g$, that is, a 3-manifold bounded by a closed orientable surface $S_g$ of genus $g$.
The \textit{handlebody group} $\mathrm{Mod}({\mathbb{H}}_g)$ 
is the subgroup of  $\Mod(S_g)$ consisting of elements whose representative homeomorphisms of $S_g$
can be extended to homeomorphisms of $\mathbb{H}_g$.
Then the \textit{hyperelliptic handlebody group} is defined by
$$\mathcal{H}({\Bbb H}_g) = \mathrm{Mod}({\Bbb H}_g) \cap \mathcal{H}(S_g).$$

\begin{theorem}
\label{thm:hyphandle}
We have 
$$L_{\mathcal{C}} (\mathcal{H}({\Bbb H}_g)) \asymp \frac{1}{g^2}.$$
\end{theorem}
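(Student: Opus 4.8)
The plan is to produce a sequence of pseudo-Anosov mapping classes lying in $\mathcal{H}(\mathbb{H}_g)$ realizing the upper bound $\ell_{\mathcal{C}} \lesssim 1/g^2$, and then invoke the Gadre--Tsai lower bound $\ell_{\mathcal{C}}(f) \gtrsim 1/\chi(S_g)^2 \asymp 1/g^2$, which holds for \emph{every} pseudo-Anosov element of $\Mod(S_g)$ and hence a fortiori for elements of the subgroup $\mathcal{H}(\mathbb{H}_g)$. So the entire content is the upper bound, and the natural route is to apply Theorem~\ref{thm:main} to a well-chosen fibered $3$-manifold $M$. I would start from a fibered $3$-manifold $M$ with $b_1(M)\ge 2$ whose fiber $S$ and monodromy $\psi$ are chosen so that: (i) $\psi$ lies in the handlebody group of $S$ (equivalently, $S$ bounds a handlebody on one side and $\psi$ extends over it), and (ii) $\psi$ commutes with a hyperelliptic involution of $S$. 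A concrete candidate is to take $S$ to be a low-genus surface (genus $2$, paralleling the computation already displayed in the introduction) and $\psi$ a Penner-type pseudo-Anosov built from Dehn twists about curves that are symmetric under the hyperelliptic involution and that bound disks in a handlebody; one checks $b_1 \geq 2$ from the homological action as in the genus-$2$ example in the excerpt.

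The key point is that the covering construction of Theorem~\ref{thm:main} is natural enough to preserve both structures. The fibers $R_n = \widetilde S/\langle h^n\widetilde\psi\rangle$ and monodromies $\psi_n$ are obtained from an infinite cyclic cover of $S$ determined by a $\psi$-invariant cohomology class $\xi_0$, together with a fixed power of the deck transformation times a lift of $\psi$. I would argue that if $\xi_0$ is chosen to be anti-invariant (or invariant, as appropriate) under the hyperelliptic involution $\iota$, then $\iota$ lifts to the $\Z$-cover $\widetilde S$, commutes with $h$ and with $\widetilde\psi$, and hence descends to an involution $\iota_n$ of each $R_n$; a genus/fixed-point count (Riemann--Hurwitz) shows $\iota_n$ is again hyperelliptic, so $\psi_n \in \mathcal{H}(S_{g_n})$. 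Similarly, the handlebody that $S$ bounds, together with its $\Z$-cover compatible with $\xi_0$, should produce a handlebody $\mathbb{H}_{g_n}$ bounded by $R_n$ over which $\psi_n$ extends; here one uses that $M$ itself, or a suitable sub-structure, restricts the covering data so that "extends over the handlebody" is a closed condition under the construction. This gives $\psi_n \in \mathcal{H}(\mathbb{H}_{g_n})$, and Theorem~\ref{thm:main} then yields $\ell_{\mathcal{C}}(\psi_n) \asymp 1/|\chi(R_n)|^2 = 1/(2g_n-2)^2 \asymp 1/g_n^2$, so $L_{\mathcal{C}}(\mathcal{H}(\mathbb{H}_{g_n})) \lesssim 1/g_n^2$ along this subsequence; finally I would fill in the intermediate genera by stabilizing (e.g.\ taking boundary connect sums with a handlebody carrying a small-dilatation symmetric map, or by a separate explicit family), so the asymptotic holds for all $g$.

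The main obstacle I anticipate is verifying simultaneously that the \emph{same} seed pair $(S,\psi)$ can be made handlebody \emph{and} hyperelliptic with $b_1(M)\ge 2$, and that both properties genuinely descend through the $\Z$-cover quotient in Theorem~\ref{thm:main} rather than merely being plausible. Concretely, one must exhibit an explicit symmetric, handlebody-extendable Penner (or Thurston--Veech) pseudo-Anosov on a small-genus surface and compute its homological action to confirm a fixed cohomology class exists; then one must check the lift of the involution and the extendability of the monodromy over the covering handlebody — the latter is the subtle part, since extendability over a handlebody is not obviously a homological condition, and I would likely need to realize the handlebody itself inside the geometry of $M$ (for instance, take $M$ to fiber so that it is obtained by gluing two handlebody pieces, or use a known fibered manifold such as a mapping torus sitting inside a genus-$2$ Heegaard picture) so that the cover has a compatible handlebody sub-bundle. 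Once that geometric input is in hand, the rest — genus asymptotics, filling in intermediate $g$, and the lower bound — is routine given Theorem~\ref{thm:main} and the Gadre--Tsai estimate.
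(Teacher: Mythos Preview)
Your overall architecture is right: the lower bound is Gadre--Tsai, and the upper bound should come from running Theorem~\ref{thm:main} on a seed $(S,\psi)$ whose special structure descends to the fibers $(R_n,\psi_n)$. But the proposal has a genuine gap at exactly the point you yourself flag as ``subtle'': you do not produce a seed, and you do not show that handlebody-extendability of $\psi$ passes to $\psi_n$. Saying the handlebody ``should'' admit a compatible $\Z$-cover is not an argument; a handlebody is not a homological object, and there is no a priori reason the $\Z$-cover of $\mathbb{H}$ determined by a surface cohomology class $\xi_0$ is again a handlebody bounded by $R_n$, nor that $h^n\widetilde\psi$ extends over it. The ``stabilization'' step to fill in missing genera is likewise undefined.

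The paper sidesteps this entirely by never working on the closed surface. It uses the Birman--Hilden and Hilden-group framework: the quotient $q:\mathcal{H}(S_g)\to\Mod(S_{0,2g+2})$ restricts to a surjection $\mathcal{H}(\mathbb{H}_g)\to SH_{2g+2}$, the Hilden group, which is the image of the spherical \emph{wicket group} $SW_{2g+2}$. Thus to land in $\mathcal{H}(\mathbb{H}_g)$ it suffices to produce monodromies on $S_{0,2g+2}$ represented by wicket braids. The seed is a concrete $5$-braid $\psi\in B_5$ (extracted from an explicit $w\in SW_6$) with fiber $D_5$; the fibers $R_n$ of $M_\psi$ are punctured spheres whose monodromies are identified, via \cite{HiroseKin17}, with explicit braids $w_{4n+8}\in SW_{4n+8}$. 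Membership in the wicket group is then a direct braid-picture check, not a topological descent argument. Taking $\phi=\psi^2$ handles the other parity of $g$, and Lemma~\ref{lem_lift} transfers the bound on $\mathcal{AC}(S_{0,2g+2})$ to $\mathcal{C}(S_g)$. What this buys over your route is that ``extends over a handlebody'' becomes the purely combinatorial condition ``is a wicket braid,'' which \emph{is} visibly preserved because the output braids are written down explicitly.
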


The following is an immediate corollary of the previous Theorem \ref{thm:hyphandle} and the lower bound by Gadre--Tsai.

\begin{corollary}
\label{cor:hyp}
We have 
$$L_{\mathcal{C}} (\mathcal{H}(S_g)) \asymp \frac{1}{g^2} \hspace{2mm}\mbox{and} \hspace{2mm} 
L_{\mathcal{C}} (\mathrm{Mod}({\Bbb H}_g) ) \asymp \frac{1}{g^2} .$$
\end{corollary}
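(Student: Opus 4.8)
The corollary is a formal consequence of Theorem~\ref{thm:hyphandle} together with the Gadre--Tsai lower bound; the plan is simply to sandwich each of the two quantities between $L_{\C}(\Mod(S_g))$ and $L_{\C}(\mathcal{H}(\mathbb{H}_g))$. The only general fact required is that $L_{\C}$ is monotone under inclusion of subgroups: if $H_1 < H_2 < \Mod(S)$ and $H_1$ contains a pseudo-Anosov element, then the pseudo-Anosov elements of $H_1$ form a subset of those of $H_2$, so the minimum of the positive quantity $\ell_{\C}(\cdot)$ over the smaller set is at least as large; that is, $L_{\C}(H_1)\ge L_{\C}(H_2)$. (In particular $L_{\C}(H)\ge L_{\C}(\Mod(S))$ for any such $H$, as already recorded in the introduction.)

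Now apply this along the chain
\[
\mathcal{H}(\mathbb{H}_g)=\Mod(\mathbb{H}_g)\cap\mathcal{H}(S_g) < \mathcal{H}(S_g) < \Mod(S_g),
\]
all of whose inclusions are immediate from the definitions, to obtain $L_{\C}(\Mod(S_g))\le L_{\C}(\mathcal{H}(S_g))\le L_{\C}(\mathcal{H}(\mathbb{H}_g))$, and likewise along $\mathcal{H}(\mathbb{H}_g) < \Mod(\mathbb{H}_g) < \Mod(S_g)$ to obtain $L_{\C}(\Mod(S_g))\le L_{\C}(\Mod(\mathbb{H}_g))\le L_{\C}(\mathcal{H}(\mathbb{H}_g))$. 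By Theorem~\ref{thm:hyphandle}, the right-hand end of each chain is $\asymp 1/g^2$. By the Euler-characteristic estimate of Gadre--Tsai quoted in the introduction --- which gives $\ell_{\C}(f)\gtrsim 1/|\chi(S_g)|^2\asymp 1/g^2$ for every pseudo-Anosov $f\in\Mod(S_g)$, hence $L_{\C}(\Mod(S_g))\gtrsim 1/g^2$ --- the left-hand end of each chain is $\gtrsim 1/g^2$. Squeezing, $L_{\C}(\mathcal{H}(S_g))\asymp 1/g^2$ and $L_{\C}(\Mod(\mathbb{H}_g))\asymp 1/g^2$, which is the corollary.

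There is essentially no obstacle at the level of the corollary itself: the lower bounds are inherited from Gadre--Tsai for free via monotonicity, and the upper bounds are read off from Theorem~\ref{thm:hyphandle}. The only things needing any attention are the direction of the monotonicity inequality --- a \emph{smaller} subgroup has the \emph{larger} minimal translation length --- and the tautological fact, built into the definition $\mathcal{H}(\mathbb{H}_g)=\Mod(\mathbb{H}_g)\cap\mathcal{H}(S_g)$, that $\mathcal{H}(\mathbb{H}_g)$ lies inside both $\mathcal{H}(S_g)$ and $\Mod(\mathbb{H}_g)$.

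All of the genuine work, of course, sits upstream in Theorem~\ref{thm:hyphandle}, specifically in its upper-bound half: one must exhibit, for all large $g$, a pseudo-Anosov element of $\mathcal{H}(\mathbb{H}_g)$ --- hence one commuting with a hyperelliptic involution \emph{and} extending over a handlebody --- of asymptotic translation length $\asymp 1/g^2$. The natural route is to apply Theorem~\ref{thm:main} to a fibered $3$-manifold in which both symmetries are visible on a fiber and survive the $\Z$-cover/quotient construction; concretely, one takes the mapping torus of a pseudo-Anosov braid preserving a trivial-tangle arc system, so that the fibers $R_n$ supplied by Theorem~\ref{thm:main} are marked disks of complexity $|\chi(R_n)|\asymp n$ with monodromies $\psi_n$ satisfying $\ell_{\C}(\psi_n)\asymp 1/|\chi(R_n)|^2$, while the branched double cover of $R_n$ over its marked points is a closed surface $S_{g_n}$ with $g_n\asymp n$ carrying the standard hyperelliptic involution, over which $\psi_n$ lifts to a pseudo-Anosov $\widetilde\psi_n$ that extends over the genus-$g_n$ handlebody branched-double-covering the ball along the trivial tangle; thus $\widetilde\psi_n\in\mathcal{H}(\mathbb{H}_{g_n})$, and the coarse Lipschitz behaviour of curve complexes under branched covering (disjoint curves have disjoint preimages) transports the estimate upstairs to give $\ell_{\C}(\widetilde\psi_n)\asymp 1/g_n^2$. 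The main difficulty there is engineering the braid to be simultaneously tangle-symmetric and economical on the curve complex, and verifying that Theorem~\ref{thm:main} preserves both features.
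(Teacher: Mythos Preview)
Your proposal is correct and matches the paper's own one-line justification: the corollary follows immediately by sandwiching $L_{\C}(\mathcal{H}(S_g))$ and $L_{\C}(\Mod(\mathbb{H}_g))$ between $L_{\C}(\Mod(S_g))$ (Gadre--Tsai lower bound) and $L_{\C}(\mathcal{H}(\mathbb{H}_g))$ (Theorem~\ref{thm:hyphandle}) via monotonicity along the subgroup chains you wrote down. Your final paragraph sketching the upstream proof of Theorem~\ref{thm:hyphandle} is accurate in spirit but extraneous to the corollary itself.
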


\subsection*{Acknowledgement}
We thank Hyungryul Baik, Mladen Bestvina, Ki Hyoung Ko, Ken'ichi Ohshika, and Bal\'azs Strenner for helpful conversations. 
The first author was supported by 
Grant-in-Aid for
Scientific Research (C) (No. 15K04875), 
Japan Society for the Promotion of Science. 
The second author was supported by Basic Science Research Program
through the National Research Foundation of Korea(NRF) funded by the Ministry of Science, ICT and Future
Planning (NRF-2016R1C1B1006843).
We'd like to also thank the referees for valuable comments.


\vspace{1em}
\section{Proof of Theorem \ref{thm:main}}

In this section, we begin with the following simple observation.

\begin{lem} \label{lemma:distance}
Let $f \in \Mod(S)$ be a pseudo-Anosov mapping class and let $\alpha$ be any essential
simple closed curve in $S$. If $d_{\C}( \alpha, f^m(\alpha))=1$ for some $m \in \N$,
then 
$$\ell_{\C}(f) \leq \frac{1}{m}.$$
\end{lem}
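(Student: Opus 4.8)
The plan is to exploit the two structural properties of $\ell_{\C}$ recalled just above the lemma: that $\ell_{\C}(f)$ does not depend on the chosen curve, and that it is homogeneous, $\ell_{\C}(f^k) = k\,\ell_{\C}(f)$. Given the hypothesis $d_{\C}(\alpha, f^m(\alpha)) = 1$, the first step is to iterate it. Since $\Mod(S)$ acts on $\C^1(S)$ by isometries, $d_{\C}(f^{im}(\alpha), f^{(i+1)m}(\alpha)) = d_{\C}(\alpha, f^m(\alpha)) = 1$ for every $i \geq 0$, so by the triangle inequality along the chain $\alpha, f^m(\alpha), f^{2m}(\alpha), \dots, f^{km}(\alpha)$ we get $d_{\C}(\alpha, f^{km}(\alpha)) \leq k$ for all $k \in \N$.

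Next I would feed this estimate into the definition of the asymptotic translation length, evaluated along the subsequence $j = km$. Because the liminf over a subsequence is at least the liminf over the whole sequence,
\[
\ell_{\C}(f) \;=\; \liminf_{j\ra\infty}\frac{d_{\C}(\alpha, f^{j}(\alpha))}{j} \;\leq\; \liminf_{k\ra\infty}\frac{d_{\C}(\alpha, f^{km}(\alpha))}{km} \;\leq\; \liminf_{k\ra\infty}\frac{k}{km} \;=\; \frac{1}{m},
\]
which is exactly the claimed bound. Equivalently, one may first observe that $d_{\C}(\alpha,(f^m)^k(\alpha)) \leq k$ forces $\ell_{\C}(f^m) \leq 1$, and then divide by $m$ using homogeneity $\ell_{\C}(f^m) = m\,\ell_{\C}(f)$.

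There is essentially no real obstacle here, which is why it is billed as a simple observation; the only point to handle with a little care is that the index in the definition of $\ell_{\C}$ runs over all of $\N$, so one genuinely has to pass to the subsequence of multiples of $m$ and invoke monotonicity of the liminf under restriction to a subsequence, rather than treating the defining liminf as if it were already taken over multiples of $m$.
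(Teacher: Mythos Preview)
Your proof is correct and follows essentially the same approach as the paper: use the isometric action to propagate the hypothesis to $d_{\C}(f^{(i-1)m}(\alpha), f^{im}(\alpha)) = 1$, apply the triangle inequality to bound $d_{\C}(\alpha, f^{jm}(\alpha)) \le j$, and conclude via $\ell_{\C}(f^m) = m\,\ell_{\C}(f)$. The paper presents only the second of your two equivalent formulations (bounding $\ell_{\C}(f^m) \le 1$ and dividing by $m$), but the content is identical.
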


\begin{proof}
By the triangle inequality, we have
{\setlength\arraycolsep{2pt}
\begin{eqnarray*}
\ell_{\C}(f^m) &=& \liminf_{j \ra \infty} \frac{d_{\C}(\alpha, f^{jm}(\alpha))}{j}\\
& \leq & \liminf_{j \ra \infty} \frac{ \sum_{i=1}^{j} d_{\C}(f^{(i-1)m}(\alpha), f^{im}(\alpha))}{j}\\
& = & \liminf_{j \ra \infty} \frac{j \cdot d_{\C}(\alpha, f^m(\alpha))}{j} = 1
\end{eqnarray*}}
Since $\ell_{\C}(f^m) = m \hspace{0.5mm} \ell_{\C}(f)$, this completes the proof.
\end{proof}

Now we prove our main theorem.

\begin{proof}[Proof of Theorem \ref{thm:main}]
Since the lower bound was established by Gadre--Tsai,
it is enough to show that there exists some constant $C$ such that
$$\ell_{\C}(\psi_n) \leq \frac{C}{|\chi(R_n)|^2}.$$

The proof consists of the following three steps. 
In step 1, we establish the structure of the $\Z$-cover $\widetilde{S}$ of $S$ 
as the union of $\Z$-copies of $S \setminus \{c\}$, where $[c]$ is the homology class
dual to the primitive cohomology $\xi_0$ fixed by $\psi$.
In step 2, using the decomposition of $\widetilde{S}$ in step 1, we will find
an integer $r$ so that $\psi_n^r(\overline{\alpha})$ and $\overline{\alpha}$ are disjoint 
in the quotient surface $R_n$, where 
$\overline{\alpha}$ is either a simple closed curve or a simple proper arc
and $\psi_n$ is a pseudo-Anosov monodromy on $R_n$.
In step 3, using Lemma \ref{lemma:distance}, we deduce that the asymptotic 
translation length of $\psi_n$ is less than or equal to $1/r$
and we show that we can choose $r$ to be quadratic in $n$. This finishes the proof.

\vspace{0.5em}
\textbf{Step 1.} (\textit{The decomposition of $\widetilde{S}$})
Let $[c]$ be a homology class in $H_1(S;\Z)$ which is dual to the primitive
cohomology $\xi_0 \in H^1(S;\Z)$.
Since $\xi_0$ is primitive, $[c]$ is also a primitive element.
If $S$ is a closed surface, one can find a representative $c$
that is an oriented simple closed curve
and if $S$ is a surface with punctures, $c$ can be chosen to be a simple proper arc
or union of disjoint simple proper arcs
(see, for instance, Proposition 6.2 in \cite{FarbMargalit12}).
Let $\widetilde{S}$ be the surface obtained by cutting $S$ along $c$ and
concatenating $\Z$-copies of $S \setminus \{c\}$ together
(see Figure \ref{figure:Z-cover} in the case of closed surfaces 
and see Figure \ref{fig_magic} in the case of punctured surfaces).
Then the natural projection map $p:\widetilde{S} \ra S$ is a covering map
corresponding to $\xi_0$ because
the kernel of the composition $\pi_1(S) \ra H_1(S;\Z) \xrightarrow{\xi_0} \Z$ 
of the Hurewicz map and $\xi_0$ is equal to $p_{\ast}(\pi(\widetilde{S}))$. 
Let $\Sigma_i$ be the copies of $S \setminus \{c\}$ on $\widetilde{S}$
such that the generator $h:\widetilde{S} \ra \widetilde{S}$ for the deck
transformation group is given by $h(\Sigma_i) = \Sigma_{i+1}$ for all $i$ (See Figure \ref{figure:Z-cover}).

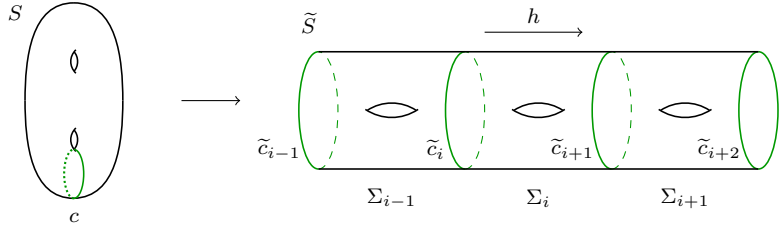
\begin{figure}[t]
\centering
\begin{tikzpicture}[scale=1.3]
		\draw[semithick] (0,1) to [out=90,in=180] (0.5,2);
		\draw[semithick] (0.5,2) to [out=0,in=90] (1,1);
		\draw[semithick] (0,1) to [out=-90,in=180] (0.5,0);
		\draw[semithick] (0.5,0) to [out=2,in=-90] (1,1);
		\draw[semithick] (0.5,1.3) .. controls (0.55,1.35) and (0.55,1.45) .. (0.5,1.5);
		\draw[semithick] (0.52,1.28) .. controls (0.45,1.35) and (0.45,1.45) .. (0.52,1.52);
		\draw[semithick] (0.52,0.48) .. controls (0.45,0.55) and (0.45,0.65) .. (0.52,0.72);
		\draw[semithick] (0.5,0.5) .. controls (0.55,0.55) and (0.55,0.65) .. (0.5,0.7);
		\draw[thick,densely dotted,color=black!40!green] (0.5,0) arc (270:90:0.1 and 0.25);
		\draw[semithick,color=black!40!green] (0.5,0) arc (-90:90:0.1 and 0.25);
		\draw[->] (1.6,1) -- (2.2,1);
		\draw[semithick,color=black!40!green] (3,0.3) arc (270:90:0.2 and 0.6);
		\draw[dashed,color=black!40!green] (3,0.3) arc (-90:90:0.2 and 0.6);
		\draw[semithick,color=black!40!green] (4.5,0.3) arc (270:90:0.2 and 0.6);
		\draw[dashed,color=black!40!green] (4.5,0.3) arc (-90:90:0.2 and 0.6);
		\draw[semithick,color=black!40!green] (6,0.3) arc (270:90:0.2 and 0.6);
		\draw[dashed,color=black!40!green] (6,0.3) arc (-90:90:0.2 and 0.6);
		\draw[semithick,color=black!40!green] (7.5,0.3) arc (270:90:0.2 and 0.6);
		\draw[semithick,color=black!40!green] (7.5,0.3) arc (-90:90:0.2 and 0.6);
		\draw[semithick] (3,0.3) -- (7.5,0.3);
		\draw[semithick] (3,1.5) -- (7.5,1.5);		
		\draw[semithick] (3.48,0.91) .. controls (3.65,0.8) and (3.85,0.8) .. (4.02,0.91);
		\draw[semithick] (3.5,0.9) .. controls (3.65,1) and (3.85,1) .. (4,0.9);		
		\draw[semithick] (5,0.9) .. controls (5.15,1) and (5.35,1) .. (5.5,0.9);
		\draw[semithick] (4.98,0.91) .. controls (5.15,0.8) and (5.35,0.8) .. (5.52,0.91);
		\draw[semithick] (6.5,0.9) .. controls (6.65,1) and (6.85,1) .. (7,0.9);
		\draw[semithick] (6.48,0.91) .. controls (6.65,0.8) and (6.85,0.8) .. (7.02,0.91);
		\draw[->] (4.7,1.7) -- (5.7,1.7) node [above,midway] {\footnotesize $h$};
		\draw node at (0.5, -0.2) {\footnotesize $c$};
		\draw node at (3.75, 0) {\footnotesize $\Sigma_{i-1}$};
		\draw node at (5.25, 0) {\footnotesize $\Sigma_{i}$};
		\draw node at (6.75, 0) {\footnotesize $\Sigma_{i+1}$};
		\draw node at (-0.1,1.9) {\footnotesize $S$};
		\draw node at (2.9,1.8) {\footnotesize $\widetilde{S}$};
		\draw node at (2.6,0.5) {\footnotesize $\widetilde{c}_{i-1}$};
		\draw node at (4.2,0.5) {\footnotesize $\widetilde{c}_{i}$};
		\draw node at (5.6,0.5) {\footnotesize $\widetilde{c}_{i+1}$};
		\draw node at (7.1,0.5) {\footnotesize $\widetilde{c}_{i+2}$};
\end{tikzpicture}
\caption{$\Z$-cover corresponding to $\xi_0$.}
\label{figure:Z-cover}
\end{figure}

\vspace{0.5em}
\textbf{Step 2.} (\textit{Finding a positive integer $r$ such that $\psi_n^r(\overline{\alpha})$ and $\overline{\alpha}$ are disjoint})
Choose a lift $\widetilde{\psi}$ and take a constant $k=k(\widetilde{\psi})$ 
such that
$$\widetilde{\psi}(\Sigma_0) \subset \Sigma_{-k} \cup \ldots \cup \Sigma_{k-1} \cup \Sigma_k.$$
(For instance, in Figure 3, $k=1$).
Note then we have $h^n \widetilde{\psi}(\Sigma_0) \subset \Sigma_{n-k} \cup \ldots \cup \Sigma_{n+k} $.

Suppose $n$ is large enough so that $n-k>1$. 
(More precise condition on $n$ will be determined later.) 
Then $h^n\widetilde{\psi}(\Sigma_0)$ and $\Sigma_0$ are disjoint and 
no orbit of $\Sigma_0$ under the cyclic group 
$\langle h^n\widetilde{\psi} \rangle$ intersect with 
$\Sigma_1 \cup \Sigma_2 \cup \ldots \cup \Sigma_{n-k-1}$. 
Let $\alpha$ be a simple closed curve or a simple proper arc contained in $\Sigma_0$
and let $\overline{\alpha}$ be
the $\langle h^n\widetilde{\psi} \rangle$-orbit of $\alpha$ in $\widetilde{S}$.
(One can choose $\alpha$ to be one of parallel copies of $c$, and $\alpha$ in Figure~\ref{figure:disjoint curves} is not the case.)
It follows that if a simple closed curve or a simple proper arc $\beta$ 
lies in $\Sigma_1 \cup \ldots \cup \Sigma_{n-k-1}$, i.e.,
disjoint from both $\alpha$ and $h^n\widetilde{\psi}(\alpha)$,
then $\overline{\beta}$ and $\overline{\alpha}$ are disjoint in 
$R_n = \widetilde{S}/\langle h^n\widetilde{\psi} \rangle$.
Since $h^{-1}$ induces a pseudo-Anosov map on $R_n$,
let us find a positive integer $r$ as large as possible such that
one of the representative of $\overline{h^{-r}(\alpha)}$ is contained in 
$\Sigma_1 \cup \ldots \cup \Sigma_{n-k-1}$ (see Figure \ref{figure:disjoint curves}).
Then this representative is disjoint from both $\alpha$ and $h^n\widetilde{\psi}(\alpha)$,
and because of the previous argument, 
$\overline{h^{-r}(\alpha)}$ and $\overline{\alpha}$ are disjoint in $R_n$.
By the fact that $h^{-1}$ descends to a psuedo-Anosov $\psi_n$ in $R_n$ together with
Lemma \ref{lemma:distance}, this allows us to obtain the upper bound for 
the asymptotic translation length of $\psi_n$.
To find such $r$, first note that since $\alpha$ is in $\Sigma_0$, we have
$\widetilde{\psi} (\alpha) \subset \Sigma_{-k} \cup \ldots \cup \Sigma_{k}$
and $\widetilde{\psi}^m  (\alpha) \subset \Sigma_{-mk} \cup \ldots \cup \Sigma_{mk}$
for any $m \in \N$.
Since the generator $h$ of the deck transformation group translates $\Sigma_i$'s,
after applying $h^{mk+1}$, we have $h^{mk+1} \widetilde{\psi}^m (\alpha) \subset \Sigma_1 \cup \ldots \cup \Sigma_{2mk+1}$. 
In order that $h^{mk+1} \widetilde{\psi}^m (\alpha)$ lies in
$\Sigma_1 \cup \ldots \cup \Sigma_{n-k-1}$, we require that $2km+1 \leq n-k-1$.
Let us choose the biggest such $m$, that is
$$m = \lfloor \frac{n-k-2}{2k} \rfloor$$
(note that the precise assumption on $n$ is $(n-k-2)/2k \geq 1$ because we want $m$ to be positive).
Since $\overline{\widetilde{\psi}(\alpha)} = \overline{h^{-n}(\alpha)}$ and hence
$\overline{h^{mk+1} \widetilde{\psi}^m (\alpha)} = \overline{h^{-(n-k)m+1}(\alpha)}$,
the desired integer for $\overline{h^{-r}(\alpha)}$ and $\overline{\alpha}$ being disjoint
is $r=(n-k)m-1$.

\begin{figure}[t] 
\centering
\begin{tikzpicture}
		\draw node at (-2.9,-0.8) {\footnotesize $\widetilde{c}_0$};
		\draw node at (-1.4,-0.8) {\footnotesize $\widetilde{c}_1$};
		\draw node at (2.6,-0.8) {\footnotesize $\widetilde{\psi}(\widetilde{c}_0)$};
		\draw node at (4.4,-0.8) {\footnotesize $\widetilde{\psi}(\widetilde{c}_1)$};
		\draw node at (3.5,-2.6) {\footnotesize $\Sigma_0$};
		\draw node at (5.1,-2.6) {\footnotesize $\Sigma_1$};
		\draw node at (2.1,-2.6) {\footnotesize $\Sigma_{-1}$};
		\path [fill=gray!30] (2.78,-2.3) .. controls (2.5,-2.1) .. (2.2,-1.75);
		\path [fill=gray!30] (2.8,-1.1) .. controls (3,-1.4) .. (3.4,-1.65);
		\path [fill=gray!30] (2.2,-1.75) -- (2.78,-2.3) -- (2.8,-1.7);
		\path [fill=gray!30] (2.78,-2.3) -- (2.8,-1.7) -- (4.3,-1.7) -- (4.28,-2.3);
		\path [fill=gray!30] (2.2,-1.75) -- (2.8,-1.7) -- (2.2,-1.65);
		\path [fill=gray!30] (2.8,-1.1) -- (3.38,-1.64) -- (3.7,-1.65) -- (4.9,-1.75) -- (4.9,-1.65) -- (4.3,-1.1);
		\path [fill=white] (3.4,-1.75) -- (2.8,-1.7) -- (3.4,-1.7);
		\path [fill=white] (4.3,-1.7) -- (4.28,-2.3) -- (3.7,-1.75) -- (3.7,-1.65);
		\path [fill=white] (4.28,-2.3) .. controls (4,-2.1) .. (3.7,-1.75);
		\path [fill=white] (4.3,-1.1) .. controls (4.5,-1.4) .. (4.9,-1.65);
		\draw[semithick,dashed,color=black!40!green] (2.2,-1.65) -- (2.8,-1.1);
		\draw[semithick,dashed,color=black!40!green] (3.4,-1.65) -- (2.2,-1.75);
		\draw[semithick,dashed,color=black!40!green] (3.4,-1.75) -- (2.8,-2.3);
		\draw[semithick,dashed,color=black!40!green] (3.7,-1.65) -- (4.3,-1.1);
		\draw[semithick,dashed,color=black!40!green] (4.9,-1.65) -- (3.7,-1.75);
		\draw[semithick,dashed,color=black!40!green] (4.9,-1.75) -- (4.3,-2.3);
		\draw[semithick] (1.3,-2.3) arc (270:90:0.2 and 0.6);
		\draw[dashed] (1.3,-2.3) arc (-90:90:0.2 and 0.6);
		\draw[semithick] (2.8,-2.3) arc (270:90:0.2 and 0.6);
		\draw[dashed] (2.8,-2.3) arc (-90:90:0.2 and 0.6);
		\draw[semithick] (4.3,-2.3) arc (270:90:0.2 and 0.6);
		\draw[dashed] (4.3,-2.3) arc (-90:90:0.2 and 0.6);
		\draw[semithick] (5.8,-2.3) arc (270:90:0.2 and 0.6);
		\draw[semithick] (5.8,-2.3) arc (-90:90:0.2 and 0.6);
		\draw[semithick] (1.3,-2.3) -- (5.8,-2.3);
		\draw[semithick] (1.3,-1.1) -- (5.8,-1.1);
		\draw[semithick,fill=white] (1.78,-1.69) .. controls (1.95,-1.8) and (2.15,-1.8) .. (2.32,-1.69);
		\draw[semithick,fill=white] (1.8,-1.7) .. controls (1.95,-1.6) and (2.15,-1.6) .. (2.3,-1.7);	
		\draw[semithick,fill=white] (3.28,-1.69) .. controls (3.45,-1.8) and (3.65,-1.8) .. (3.82,-1.69);
		\draw[semithick,fill=white] (3.3,-1.7) .. controls (3.45,-1.6) and (3.65,-1.6) .. (3.8,-1.7);
		\draw[semithick,fill=white] (4.78,-1.69) .. controls (4.95,-1.8) and (5.15,-1.8) .. (5.32,-1.69);
		\draw[semithick,fill=white] (4.8,-1.7) .. controls (4.95,-1.6) and (5.15,-1.6) .. (5.3,-1.7);
		\draw[semithick,color=black!40!green] (2.78,-2.3) .. controls (2.5,-2.1) .. (2.2,-1.75);
		\draw[semithick,color=black!40!green] (3.4,-1.75) .. controls (2.8,-1.7) .. (2.2,-1.65);
		\draw[semithick,color=black!40!green] (2.8,-1.1) .. controls (3,-1.4) .. (3.4,-1.65);
		\draw[semithick,color=black!40!green] (4.28,-2.3) .. controls (4,-2.1) .. (3.7,-1.75);
		\draw[semithick,color=black!40!green] (4.9,-1.75) .. controls (4.3,-1.7) .. (3.7,-1.65);
		\draw[semithick,color=black!40!green] (4.3,-1.1) .. controls (4.5,-1.4) .. (4.9,-1.65);
		\draw[->] (0.45,-1.7) -- (0.85,-1.7);
		\path [fill=gray!20] (-3.1,-2.3) -- (-1.5,-2.3) -- (-1.5,-1.1) -- (-3,-1.1);
		\draw[semithick] (-4.5,-2.3) arc (270:90:0.2 and 0.6);
		\draw[dashed] (-4.5,-2.3) arc (-90:90:0.2 and 0.6);
		\draw[semithick,color=black!40!green,fill=gray!20] (-3,-2.3) arc (270:90:0.2 and 0.6);
		\draw[dashed,color=black!40!green] (-3,-2.3) arc (-90:90:0.2 and 0.6);
		\draw[semithick,color=black!40!green,fill=white] (-1.5,-2.3) arc (270:90:0.2 and 0.6);
		\draw[dashed,color=black!40!green] (-1.5,-2.3) arc (-90:90:0.2 and 0.6);
		\draw[semithick] (0,-2.3) arc (270:90:0.2 and 0.6);
		\draw[semithick] (0,-2.3) arc (-90:90:0.2 and 0.6);
		\draw[semithick] (-4.5,-2.3) -- (0,-2.3);
		\draw[semithick] (-4.5,-1.1) -- (0,-1.1);
		\draw[semithick,fill=white] (-4.02,-1.69) .. controls (-3.85,-1.8) and (-3.65,-1.8) .. (-3.48,-1.69);
		\draw[semithick,fill=white] (-4,-1.7) .. controls (-3.85,-1.6) and (-3.65,-1.6) .. (-3.5,-1.7);	
		\draw[semithick,fill=white] (-2.52,-1.69) .. controls (-2.35,-1.8) and (-2.15,-1.8) .. (-1.98,-1.69);
		\draw[semithick,fill=white] (-2.5,-1.7) .. controls (-2.35,-1.6) and (-2.15,-1.6) .. (-2,-1.7);
		\draw[semithick,fill=white] (-1.02,-1.69) .. controls (-0.85,-1.8) and (-0.64,-1.8) .. (-0.492,-1.69);
		\draw[semithick,fill=white] (-1,-1.7) .. controls (-0.85,-1.6) and (-0.65,-1.6) .. (-0.5,-1.7);
		\draw node at (-3.6,-2.6) {\footnotesize $\Sigma_{-1}$};
		\draw node at (-2.2,-2.6) {\footnotesize $\Sigma_0$};
		\draw node at (-0.7,-2.6) {\footnotesize $\Sigma_1$};
		\draw node at (3.6,-1.33) {\footnotesize $\widetilde{\psi}(\Sigma_0)$};
\end{tikzpicture}\label{figure:choiceoflift}
\vspace{-1em}
\caption{A lift of $\psi$, where $\psi$ is given as in Figure \ref{figure:genus2}.
Curves $\widetilde{c}_0$ and $\widetilde{c}_1$, which are the lifts of $c$, determines
a fundamental region $\Sigma_0$.
Then the images $\widetilde{\psi}(\widetilde{c}_0)$ and $\widetilde{\psi}(\widetilde{c}_1)$,
lifts of $\psi(c)$,
bound the image $\widetilde{\psi}(\Sigma_0)$, which lies in $\Sigma_{-1} \cup \Sigma_0 \cup \Sigma_1$.
}

\vspace{1.5em}
\begin{tikzpicture}
		\draw[fill=gray!20,gray!20] (1.5,0) rectangle +(6,1.2);
		\draw[semithick] (0,0) arc (270:90:0.2 and 0.6);
		\draw[dashed] (0,0) arc (-90:90:0.2 and 0.6);
		\draw[semithick,fill= gray!20] (1.5,0) arc (270:90:0.2 and 0.6);
		\draw[dashed,fill= gray!20] (1.5,0) arc (-90:90:0.2 and 0.6);
		\draw[semithick,fill= gray!20] (3,0) arc (270:90:0.2 and 0.6);
		\draw[dashed,fill= gray!20] (3,0) arc (-90:90:0.2 and 0.6);
		\draw[semithick] (4.5,0) arc (270:90:0.2 and 0.6);
		\draw[dashed] (4.5,0) arc (-90:90:0.2 and 0.6);
		\draw[semithick] (6,0) arc (270:90:0.2 and 0.6);
		\draw[dashed] (6,0) arc (-90:90:0.2 and 0.6);
		\draw[semithick,fill=white] (7.5,0) arc (270:90:0.2 and 0.6);
		\draw[dashed,fill=white] (7.5,0) arc (-90:90:0.2 and 0.6);
		\draw[semithick] (9,0) arc (270:90:0.2 and 0.6);
		\draw[dashed] (9,0) arc (-90:90:0.2 and 0.6);
		\draw[semithick] (10.5,0) arc (270:90:0.2 and 0.6);
		\draw[dashed] (10.5,0) arc (-90:90:0.2 and 0.6);
		\draw[semithick] (12,0) arc (270:90:0.2 and 0.6);
		\draw[semithick] (12,0) arc (-90:90:0.2 and 0.6);
		\draw[-,semithick] (0,0) -- (12,0);
		\draw[-,semithick] (0,1.2) -- (12,1.2);
		\draw[semithick] (0.5,0.6) .. controls (0.65,0.7) and (0.85,0.7) .. (1,0.6);
		\draw[semithick] (0.48,0.61) .. controls (0.65,0.5) and (0.85,0.5) .. (1.02,0.61);
		\draw[semithick,fill=white] (1.98,0.61) .. controls (2.15,0.5) and (2.35,0.5) .. (2.52,0.61);
		\draw[semithick,fill=white] (2,0.6) .. controls (2.15,0.7) and (2.35,0.7) .. (2.5,0.6);		
		\draw[semithick,fill=white] (3.48,0.61) .. controls (3.65,0.5) and (3.85,0.5) .. (4.02,0.61);
		\draw[semithick,fill=white] (3.5,0.6) .. controls (3.65,0.7) and (3.85,0.7) .. (4,0.6);
		\draw[semithick,fill=white] (4.98,0.61) .. controls (5.15,0.5) and (5.35,0.5) .. (5.52,0.61);
		\draw[semithick,fill=white] (5,0.6) .. controls (5.15,0.7) and (5.35,0.7) .. (5.5,0.6);		
		\draw[semithick,fill=white] (6.48,0.61) .. controls (6.65,0.5) and (6.85,0.5) .. (7.02,0.61);
		\draw[semithick,fill=white] (6.5,0.6) .. controls (6.65,0.7) and (6.85,0.7) .. (7,0.6);
		\draw[semithick,fill=white] (7.98,0.61) .. controls (8.15,0.5) and (8.35,0.5) .. (8.52,0.61);
		\draw[semithick,fill=white] (8,0.6) .. controls (8.15,0.7) and (8.35,0.7) .. (8.5,0.6);		
		\draw[semithick] (9.5,0.6) .. controls (9.65,0.7) and (9.85,0.7) .. (10,0.6);
		\draw[semithick] (9.48,0.61) .. controls (9.65,0.5) and (9.85,0.5) .. (10.02,0.61);
		\draw[semithick] (11,0.6) .. controls (11.15,0.7) and (11.35,0.7) .. (11.5,0.6);
		\draw[semithick] (10.98,0.61) .. controls (11.15,0.5) and (11.35,0.5) .. (11.52,0.61);
		\draw[thick,color=red] (0.75,0.6) ellipse (0.4 and 0.2);
		\draw[thick,color=red] (8.5,0.6) .. controls (9,0.2) .. (9.75,0);
		\draw[semithick,dashed,color=red] (9.75,0) .. controls (10.5,0.2) .. (11,0.6);
		\draw[semithick,dashed,color=red] (8.5,0.6) .. controls (9,1) .. (9.75,1.2);
		\draw[thick,color=red] (9.75,1.2) .. controls (10.5,1) .. (11,0.6);
		\draw[thick,color=blue] (3.8,1) .. controls  (1.1,1.2) and (1.1, -0.3) .. (3.8,0.85);
		\draw[dotted,thick,color=blue] (4,0.925) -- (6,0.925);
		\draw[thick,color=blue] (5.2,0.2) .. controls  (7.9,0) and (7.9, 1.5) .. (5.2,0.35);
		\draw[dotted,thick,color=blue] (3,0.275) -- (5,0.275);	
		\draw node at (0.75, -0.3) {\footnotesize $\Sigma_0$};
		\draw node at (2.25, -0.3) {\footnotesize $\Sigma_1$};
		\draw node at (4.5,-0.3)    {\footnotesize $\cdots$};
		\draw node at (6.75, -0.3) {\footnotesize $\Sigma_{n-k-1}$};
		\draw node at (9.75, -0.3) {\footnotesize $\Sigma_n$};
		\draw node at (0.75, 1) {\footnotesize $\alpha$};
		\draw node at (9.85, 1.5) {\footnotesize $\widetilde{\psi}h^n(\alpha)$};
		\draw node at (4.5, 1.5) {\footnotesize $\overline{h^{-r}(\alpha)}$};
		\draw node at (0,1.5) {\footnotesize $\widetilde{S}$};
\end{tikzpicture}
\caption{The curves $\overline{h^{-r}(\alpha)}$ and $\overline{\alpha}$ are disjoint in $R_n = \widetilde{S}/\langle \widetilde{\psi}h^n \rangle$.}
\label{figure:disjoint curves}
\end{figure}

\vspace{0.5em}
\textbf{Step 3.} (\textit{Small asymptotic translation length $\ell_{\C}(\psi_n)$})
We first remark that arc and curve complex $\AC(S)$ and curve complex $\C(S)$
are 2-bilipschitz (see, for instance, \cite[Lemma 2.2]{MasurMinsky00} or \cite{HenselPrzytyckiWebb15}).
This implies that the asymptotic translation lengths $\ell_{\AC}(f)$ and $\ell_{\C}(f)$ of 
a pseudo-Anosov mapping class $f$ on the $1$-skeletons $\AC^1(S)$ and $\C^1(S)$,
respectively, have the same asymptotic behavior, that is,
$$\ell_{\AC}(f) \asymp \ell_{\C}(f).$$
So without loss of generality, we may assume that $\alpha$ is a simple closed curve and
compute the asymptotic translation length on the curve complex $\C(R_n)$.
In the case when $\alpha$ is a simple proper arc, we think of computing 
the asymptotic translation length of the arc and curve complex $\AC(R_n)$ and it gives us
the same asymptotic behavior on the curve complex.

By the previous step, $\psi_n^{(n-k)m-1}(\overline{\alpha})$ and $\overline{\alpha}$ are disjoint in $R_n$. Then by Lemma \ref{lemma:distance}, we have
$$\ell_{\C}(\psi_n) \leq \frac{1}{(n-k)m-1},$$
and by the fact that $\chi(R_n)$ is a linear function in $n$, we have 
$$\ell_{\C}(\psi_n) \leq \frac{C}{|\chi(R_n)|^2}$$
for some $C>0$.
This completes the proof.
\end{proof}

\vspace{1em}

\section{Backgrounds for Theorems~\ref{thm:sphere-torus}, \ref{prop:upperbound}, and \ref{thm:hyphandle}.}

This section includes some backgrounds and basic facts for the proofs of the rest of theorems. 
Consider a pseudo-Anosov mapping class $\psi \in \mathrm{Mod}(S)$. 
Let $\Psi: S \rightarrow S$ be any representative of $\psi$. 
The {\it mapping torus} $M_{\psi}$ is defined by 
$$M_{\psi}= S \times [0,1]/ \sim,$$
where $\sim$ identifies $(x,1)$ with $(\Psi(x),0)$ for each $x \in S$. 
Then the manifold $M_{\psi}$ fibering over the circle $S^1$ is  hyperbolic. 
Suppose that  there is a primitive cohomology class $\xi_0 \in H^1(S; {\Bbb Z})$ fixed by $\psi$. 
This implies that  $b_1(M_{\psi}) \ge 2$. 
Then Theorem~\ref{thm:main} 
says that for $n$ sufficiently large, 
$R_n=  \widetilde{S}/\langle h^n\widetilde{\psi} \rangle$ is a fiber of $M_{\psi}$ 
with $\chi(R_n) \asymp n$ such that 
the pseudo-Anosov monodromy $\psi_n$ defined on the fiber $R_n$ satisfies 
$\ell_{\mathcal{C}}(\psi_n) \asymp 1/|\chi(R_n)|^2$.

\subsection{Fibered $3$-manifolds  from braids}
\label{subsection_fibered}

Let $B_n$ be the  the braid group with $n$ strands. 
In this paper braids are depicted vertically. 
We define the  product $\beta \beta'$ of $\beta, \beta' \in B_n$ in the usual way,
namely,  we stack  $\beta$ on  $\beta'$ and concatenate the bottom $i$th end point of 
$\beta$ with the top $i$th end point of $\beta'$ for each $i= 1, \cdots, n$. 
Then we obtain $n$ strands.  
The product $\beta \beta' $ is the resulting $n$-braid after rescaling.

We briefly review a relation between $B_n$ and $\mathrm{Mod}(D_n)$. 
To do this we assign an orientation for each $n$-braid  from the bottom endpoints to the top endpoints (see 
Figure~\ref{fig_nbraid}(2)). 
We take a natural basis $t_i  \in H_1(D_n; {\Bbb Z})$, 
where a representative of $t_i$ is a small oriented loop in $D_n$ centered at the $i$th puncture of $D_n$ for $i= 1, \cdots ,n$.   
Let $c_i$  be a simple proper arc in $D_n$ which connects the $i$th puncture of $D_n$ to the boundary $\partial D$ as in Figure~\ref{fig_nbraid}(1). 
Then there is an isomorphism 
$$\Gamma: B_n \rightarrow \mathrm{Mod}(D_n)$$ 
which sends the generator $\sigma_i$ of $B_n$ to the left-handed half twist $h_i$ (see Figure~\ref{fig_nbraid}(2)(3)). 
The orientation of braids as we described above induces the motion of $n$ punctures in the disk, 
which defines the above map $\Gamma$.

\begin{figure}
\centering
\includegraphics[width=3in]{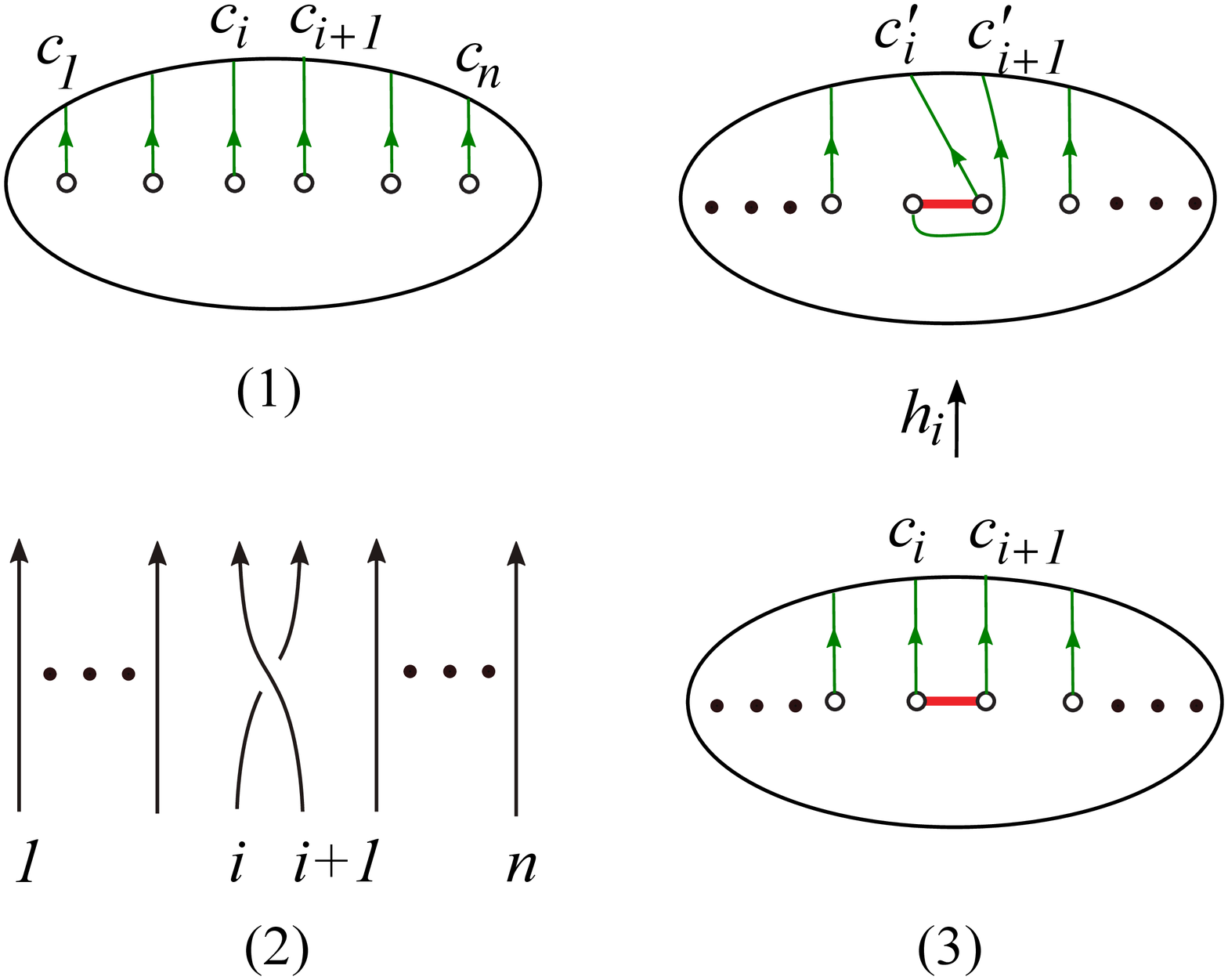}
\caption{(1) Arcs $c_i$  in the $n$-punctured disk $D_n$. 
(2) Generators $\sigma_i  $. 
(3) Half twist $h_i $. 
($c_i'= h_i(c_i)$ and $c_{i+1}'= h_i(c_{i+1})$.)} 
\label{fig_nbraid}
\end{figure}
We have a  natural homomorphism 
$$\mathfrak{c} :  \mathrm{Mod}(D_n) \rightarrow \mathrm{Mod}(S_{0,n+1})$$  
collapsing the boundary $\partial D$ of the disk to the $(n+1)$th puncture of $S_{0,n+1}$. 
By definition, $ \mathfrak{c}( \mathrm{Mod}(D_n))$ is isomorphic to the subgroup 
of $\mathrm{Mod}(S_{0,n+1})$ which fixes this puncture. 
We sometimes identify   $f \in  \mathrm{Mod}(D_n) $ with $\mathfrak{c}(f) \in  \mathrm{Mod}(S_{0,n+1})$. 
We simply denote by $\beta$, 
both mapping classes $\Gamma(\beta) \in \mathrm{Mod}(D_n)$ and 
$\mathfrak{c}(\Gamma(\beta)) \in \mathrm{Mod}(S_{0,n+1})$.

The closure $\mathrm{cl}(\beta)$ of   $\beta \in B_n$ is a knot or link in the $3$-sphere $S^3$. 
Let $\mathcal{A}$ be a braid axis of $\beta$ which is an unknot in $S^3$. 
Then $\mathrm{cl}(\beta)$ runs around the unknot $\mathcal{A}$ in a monotone manner. 
We set $\mathrm{br}(\beta)=  \mathrm{cl}(\beta) \cup \mathcal{A}$ which is a link in $S^3$ 
whose number of the  components is greater than or equal to $ 2$, 
and let  us set $M_{\beta} = S^3 \setminus \mathrm{br}(\beta)$. 
The $3$-manifold $M_{\beta}$ is homeomorphic to the interior of the mapping torus of the monodromy $\beta \in \mathrm{Mod}(D_n)$, 
and $b_1(M_{\beta}) \ge 2$. 
A spanning disk by the unknot $\mathcal{A}$ has $n$ punctures in $M_{\beta} $, and 
such a disk with punctures is a fiber of $M_{\beta}$ with monodromy $\beta$.

\begin{center}
\begin{figure}[t]
\includegraphics[width=5in]{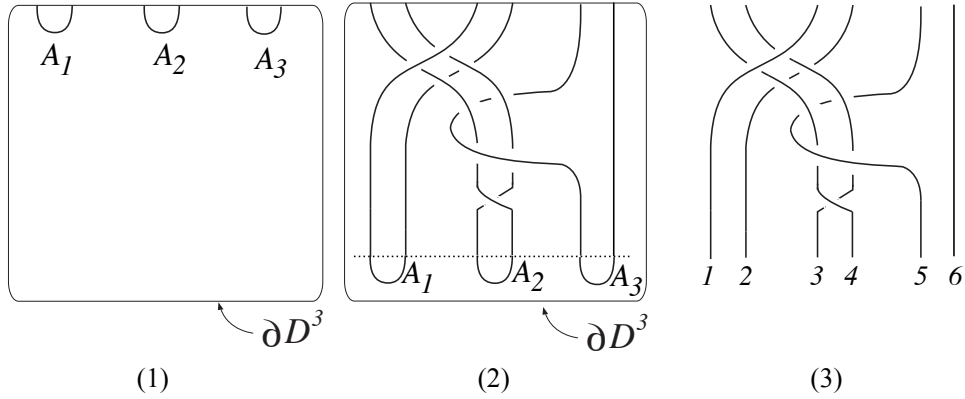}
\caption{(1) ${\bf A}$ in the case $n= 3$. (2) $^{w}{\bf A}$. (3)  $w \in SW_6 < SB_6$.} 
\label{fig_wicket}
\end{figure}
\end{center}

\begin{figure}[t]
\centering
\includegraphics[width=4.5in]{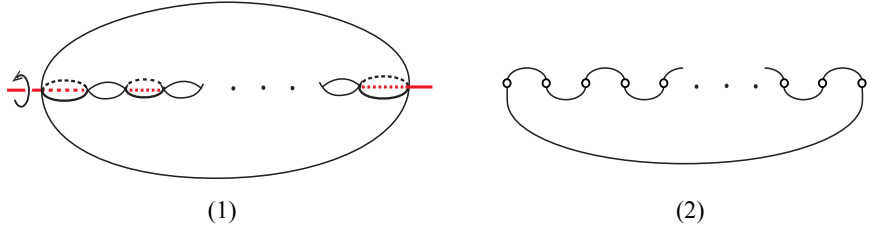}
\caption{(1) $\mathcal{I}: S_g \rightarrow S_g$. 
(2) Sphere $S_g/\mathcal{I}$ with $2g+2$ marked points. 
Small circles in the figure indicate marked points.} 
\label{fig_iota}
\end{figure} 

\vspace{-2em}

\subsection{Subgroups of mapping class groups}
\label{subsection_subgroup}

Let $SB_m$ be the {\it spherical} $m$-braid group. 
We now introduce the subgroup $SW_{2n}$ of $SB_{2n}$. 
Let $A_1, A_2, \cdots, A_n$ be  $n$ disjoint unknotted arcs properly embedded in the $3$-ball $D^3$ 
so that ${\bf A} = A_1 \cup \cdots \cup A_{n}$ is unlinked as in Figure~\ref{fig_wicket}(1). 
The boundary $\partial {\bf A}$ is the set  of $2n$ points in the $2$-sphere $\partial D^3$.

For $b \in SB_{2n}$, we stack $b$ on ${\bf A} $, 
and concatenate the bottom endpoints of $b$ with the endpoints of ${\bf A}$. 
As a result we obtain $n$ disjoint (knotted) arcs  $^{b}{\bf A}$ 
properly embedded in $D^3$ (see Figure~\ref{fig_wicket}(2)).
The {\it wicket group} $SW_{2n}$ is the subgroup of $SB_{2n}$ 
generated by braids $b$'s such that 
$^{b}{\bf A}$ is isotopic to ${\bf A}$ relative to $\partial {\bf A}$. 
It is easy to see that  the braid $w \in SB_6$ as shown in Figure~\ref{fig_wicket}(3) is an element of $SW_6$.  

There is a spherical version of the isomorphism $\Gamma: B_n \rightarrow \mathrm{Mod}(D_n)$, 
namely we have a surjective homomorphism 
$SB_m \rightarrow \mathrm{Mod}(S_{0,m})$ 
which sends the generator $\sigma_i$ of $SB_m$ to the left-handed half twist between the $i$th and $(i+1)$st punctures 
(cf. Figure~\ref{fig_nbraid}(2)(3)). 
We also denote this homomorphism by 
$$\Gamma: SB_m \rightarrow \mathrm{Mod}(S_{0,m}).$$ 
Its kernel  is isomorphic to ${\Bbb Z}/2{\Bbb Z}$ generated by a full twist $\Delta^2 \in SB_m$, 
where $\Delta$ is a half twist.  
When $m=2n$ the image $\Gamma(SW_{2n}) $ of $SW_{2n}$ under the map $\Gamma$ 
is a subgroup of $ \mathrm{Mod}(S_{0,2n})$ which is so-called   {\it Hilden group}, denoted by $SH_{2n}$, and 
$$SH_{2g+2} \simeq SW_{2g+2}/  \langle \Delta^2 \rangle$$ 
holds (see \cite{HiroseKin17}).

For the proof of Theorem~\ref{thm:hyphandle}, 
we recall a connection between the wicket group and the hyperellitic handlebody group. 
We first state a theorem by Birman and Hilden  
which relates $\mathcal{H}(S_g)$ to $\mathrm{Mod}(S_{0,2g+2})$. 
Each homeomorphism on $S_g$ which commutes with some fixed hyperelliptic involution $\mathcal{I}: S_g \rightarrow S_g$ 
(Figure~\ref{fig_iota}(1)) preserves the set of fixed points of $\mathcal{I}$ consisting of $2g+2$ points.
Such a homeomorphism induces a homeomorphism on a sphere $S_g/ \mathcal{I}$ 
which preserves these fixed points (Figure~\ref{fig_iota}(2)). 
Thus we have a map 
$$q: \mathcal{H}(S_g) \rightarrow \mathrm{Mod}(S_{0,2g+2})$$ 
by choosing a representative 
of each mapping class of $\mathcal{H}(S_g)$ 
which commutes with $\mathcal{I}$. 
It is shown in \cite{BirmanHilden71} that the map $q$ is well-defined and 
it is a surjective homomorphism whose kernel is generated by $\iota = [\mathcal{I}] \in \mathcal{H}(S_g)$. 
In particular we have 
$$\mathcal{H}(S_g) / \langle \iota \rangle \simeq \mathrm{Mod}(S_{0,2g+2}) 
\simeq SB_{2g+2}/ \langle \Delta^2 \rangle.$$

On the other hand, it is proved in \cite{HiroseKin17}  that 
there is a surjective homomorphism 
$$Q: \mathcal{H}({\Bbb H}_g) \rightarrow SH_{2g+2}$$ 
whose kernel is generated by $\iota $. 
The map $Q$ is given by the restriction 
$$q|_{ \mathcal{H}({\Bbb H}_g)}:  \mathcal{H}({\Bbb H}_g) \rightarrow SH_{2g+2} < \mathrm{Mod}(S_{0,2g+2}).$$ 
Putting all things together, we have 
$$\mathcal{H}({\Bbb H}_g) / \langle \iota \rangle \simeq SH_{2g+2} \simeq SW_{2g+2}/  \langle \Delta^2 \rangle.$$
Thus an element $f \in SH_{2g+2}$ can be described by a braid $v \in SW_{2g+2}$, i.e., $f= \Gamma(v)$. 
Moreover  
a lift $\widehat{f}$ of $f$ under the map $q|_{ \mathcal{H}({\Bbb H}_g)}= Q$ is an element of $\mathcal{H}({\Bbb H}_g)$. 
We simply denote by $v$, 
the element $\Gamma(v) $ in the Hilden group $SH_{2g+2}$.

The following lemma is used in the proofs of the rest of theorems (other than Theorem~\ref{thm:sphere-torus}(2)).

\begin{lem}
\label{lem_lift}
Let $f \in \mathrm{Mod}(S_{0,2g+2})$ for $g \ge 2$ and let $\widehat{f} \in \mathcal{H}(S_g)$ be a lift of $f$ 
under the map $q: \mathcal{H}(S_g) \rightarrow \mathrm{Mod}(S_{0,2g+2})$. 
We take any $\alpha \in \mathcal{AC}^0(S_{0, 2g+2})$, i.e.,
$\alpha$ is a homotopy class of an arc or simple closed curve in $S_{0, 2g+2}$. 
Suppose that  $d_{\mathcal{AC}}(\alpha, f^m(\alpha))= 1$ for some $m \in {\Bbb N}$, 
where $d_{\mathcal{AC}}$ is the path metric on $ \mathcal{AC}(S_{0, 2g+2})$. 
Then 
$$\ell_{\mathcal{C}}(\widehat{f}) \le \frac{1}{m}.$$
\end{lem}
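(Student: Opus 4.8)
The plan is to relate distances in the arc-and-curve complex $\mathcal{AC}(S_{0,2g+2})$ downstairs to distances in the curve complex $\mathcal{C}(S_g)$ upstairs via the Birman–Hilden covering, and then to invoke Lemma~\ref{lemma:distance}. First I would recall that $q: \mathcal{H}(S_g) \rightarrow \mathrm{Mod}(S_{0,2g+2})$ comes from the branched double cover $S_g \rightarrow S_g/\mathcal{I} = S_{0,2g+2}$. Given $\alpha \in \mathcal{AC}^0(S_{0,2g+2})$, I would lift $\alpha$ to an essential simple closed curve $\widehat{\alpha}$ in $S_g$: if $\alpha$ is a simple closed curve not surrounding a single branch point, its preimage is one or two disjoint essential simple closed curves (pick one component); if $\alpha$ is an arc between two branch points, its preimage is a single essential simple closed curve; if $\alpha$ surrounds a single branch point its preimage bounds a disk, but such curves can be excluded or handled by passing to a nearby vertex. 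In all relevant cases we get $\widehat{\alpha} \in \mathcal{C}^0(S_g)$.

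The key point is the \emph{disjointness transfer}: since $\widehat{f}$ is a lift of $f$, it commutes with $\mathcal{I}$ and so $\widehat{f}^m(\widehat{\alpha})$ is a component of the preimage of $f^m(\alpha)$. Because $d_{\mathcal{AC}}(\alpha, f^m(\alpha)) = 1$, the arcs/curves $\alpha$ and $f^m(\alpha)$ have disjoint representatives in $S_{0,2g+2}$; pulling back under the covering, $\widehat{\alpha}$ and $\widehat{f}^m(\widehat{\alpha})$ have disjoint representatives in $S_g$. Hence either $d_{\mathcal{C}}(\widehat{\alpha}, \widehat{f}^m(\widehat{\alpha})) \le 1$, or the two lifts coincide — and in the latter case $\widehat{f}^m$ fixes the curve $\widehat{\alpha}$, which would force $\ell_{\mathcal{C}}(\widehat{f}^m) = 0$ contradicting that $\widehat{f}$ is pseudo-Anosov (a lift of a pseudo-Anosov on $S_{0,2g+2}$ is pseudo-Anosov on $S_g$, since the invariant foliations pull back). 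So we may assume $d_{\mathcal{C}}(\widehat{\alpha}, \widehat{f}^m(\widehat{\alpha})) = 1$, and Lemma~\ref{lemma:distance} applied to $\widehat{f}$ and $\widehat{\alpha}$ gives $\ell_{\mathcal{C}}(\widehat{f}) \le 1/m$.

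The main obstacle I anticipate is the bookkeeping on preimages of arcs and curves under the branched cover: making sure that the chosen lift $\widehat{\alpha}$ is genuinely essential (not a boundary of a disk or a peripheral curve that got filled in), that the disjointness of representatives is preserved rather than forcing intersections at branch points, and that one can always arrange $\widehat{f}^m(\widehat{\alpha}) \ne \widehat{\alpha}$ as curves. All of these are standard facts about the hyperelliptic cover, but they require a careful case analysis by the topological type of $\alpha$ (arc between branch points, arc from branch point to itself is not a case since $S_{0,2g+2}$ has no boundary, simple closed curve separating the branch points into sets of sizes $j$ and $2g+2-j$, etc.). Once that analysis is in place, the argument is a direct consequence of Lemma~\ref{lemma:distance}.
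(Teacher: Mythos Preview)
Your proposal is correct and follows essentially the same route as the paper: lift $\alpha$ through the hyperelliptic branched cover, observe that disjointness downstairs pulls back to disjointness upstairs, verify that the lift is an essential simple closed curve, and invoke Lemma~\ref{lemma:distance}. The paper organizes the curve case by the parity of the puncture counts $n_1,n_2$ on the two sides of $\alpha$ (odd/odd gives a single separating essential lift, even/even gives two non-separating components, of which one is chosen), and it simply asserts $d_{\mathcal{C}}(\widehat{\alpha},\widehat{f}^m(\widehat{\alpha}))=1$ without your extra discussion of the possible coincidence $\widehat{\alpha}=\widehat{f}^m(\widehat{\alpha})$; your pseudo-Anosov argument for that edge case is fine in the intended applications (and harmless in general, since coincidence would give $\ell_{\mathcal{C}}(\widehat{f})=0\le 1/m$ anyway).
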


It is well-known and not hard to see that 
if $f \in \mathrm{Mod}(S_{0,2g+2})$ is pseudo-Anosov, then $\widehat{f} \in \mathcal{H}(S_g)$ is also pseudo-Anosov.

\begin{proof}[Proof of Lemma~\ref{lem_lift}]
By abuse of the notation, a representative of $\alpha \in \mathcal{AC}^0(S_{0, 2g+2})$ is denoted by the same $\alpha$. 
Let $\widehat{\alpha} \subset S_g$ be the preimage $q^{-1}(\alpha)$ 
of a simple arc or simple closed curve $\alpha$ in $S_{0,2g+2}$ under the map $q$. 
If $\alpha$ is a simple arc, then $\widehat{\alpha}$ is a non-separating simple closed curve in $S_g$ 
which means $\widehat{\alpha}$ is essential. 
Hence $\widehat{\alpha} \in \mathcal{C}^0(S_g)$. 
The assumption implies that 
$d_{\mathcal{C}} (\widehat{\alpha}, (\widehat{f})^m(\widehat{\alpha}))= 1$. 
The claim follows from Lemma~\ref{lemma:distance}. 

If $\alpha$ is a simple closed curve, 
then $\alpha$ cuts $S_{0,2g+2}$ into two components $S_{(1)}$ and $S_{(2)}$ 
which are disks with punctures $n_1 \ge 2$ and $n_2 \ge 2$ respectively. 
Since $n_1+ n_2= 2g+2$,  both $n_1$ and $n_2$ have the same parity. 

We first consider the case where  $n_1$ and $n_2$ are odd. 
Then $n_1,n_2 \ge 3$. 
Observe that  $\widehat{\alpha}$ is a single simple closed curve. 
Since $\widehat{\alpha}$ cuts $S_g$ into the essential surfaces 
$q^{-1}(S_{(1)})$ and $q^{-1}(S_{(2)})$  with positive genera, 
$\widehat{\alpha}$ is a separating and essential simple closed curve. 
We have $d_{\mathcal{C}} (\widehat{\alpha}, (\widehat{f})^m(\widehat{\alpha}))= 1$ by the assumption. 
Thus $\ell_{\mathcal{C}}(\widehat{f}) \le \frac{1}{m}$ holds. 

Let us consider the remaining case where both $n_1$ and $n_2$ are even with $n_1, n_2 \ge 2$. 
Observe that $\widehat{\alpha}$ has two components $\widehat{\alpha}_{(1)}$ and $\widehat{\alpha}_{(2)}$ 
which are non-separating simple closed curves. 
Hence $\widehat{\alpha}_{(i)} \in \mathcal{C}^0(S_g)$ for $i= 1,2$. 
We have $d_{\mathcal{C}} (\widehat{\alpha}_{(i)}, (\widehat{f})^m(\widehat{\alpha}_{(i)}))= 1$ by the assumption, 
and hence $\ell_{\mathcal{C}}(\widehat{f}) \le \frac{1}{m}$ holds. 
We complete the proof. 
\end{proof}

\section{Proof of Theorem~\ref{thm:sphere-torus}}

\begin{figure}[tp]
\centering
\includegraphics[width=3.5in]{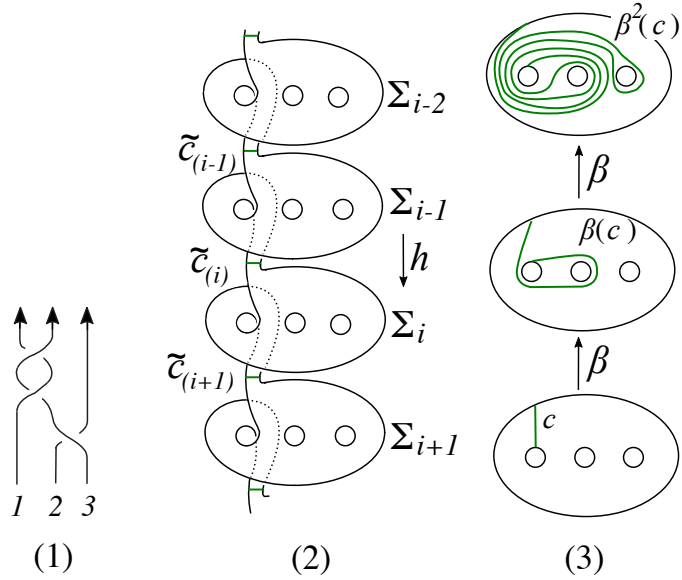}
\caption{
(1) $\beta = \sigma_1^{-2} \sigma_2 \in B_3$. 
(2) ${\Bbb Z}$-cover $\widetilde{S} $ over $S= D_3$ corresponding to the dual to $c= c_1$. 
(3) $c$, $\beta(c)$ and $ \beta^2(c)$.} 
\label{fig_magic}
\end{figure}

\begin{figure}[tp]
\centering
\includegraphics[width=3.8in]{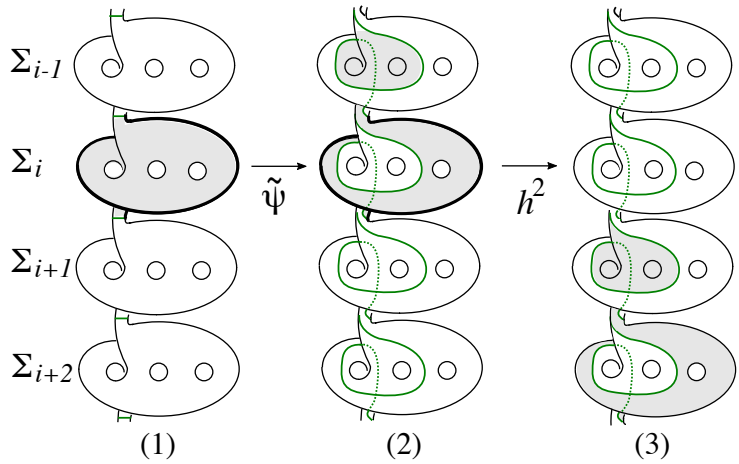}
\caption{
Illustration of $h^2 \widetilde{\psi}: \widetilde{S} \rightarrow  \widetilde{S}$. 
Shaded regions in (1)(2) and (3) are $\Sigma_i$, $\widetilde{\psi}(\Sigma_i)$ and 
$h^2 \widetilde{\psi}(\Sigma_i)$ respectively.} 
\label{fig_3braid_cover}
\end{figure}

This section is devoted to the proof of Theorem~\ref{thm:sphere-torus}. 
In the proof of Theorem~\ref{thm:sphere-torus}(2), 
we reprove the previous result 
$ \mathrm{Mod}(S_{1,2n}) \asymp \frac{1}{n^2}$ by Gadre--Tsai.

\begin{proof}[Proof of Theorem \ref{thm:sphere-torus}(1)] 
We separate the proof into two cases, 
depending on the parity of the number of punctures of $D_n$. 
We first deal with the case where $n$ is even. 
We consider the pseudo-Anosov braid $\beta = \sigma_1^{-2} \sigma_2 \in B_3$ 
(Figure~\ref{fig_magic}(1)) and 
the fibered hyperbolic $3$-manifold $M_{\beta}$. 
We take a fiber $S= D_3$ with monodromy $\psi= \beta$ of $M_{\beta}$. 
Let $\xi_0 \in H^1(S; {\Bbb Z})$ be the primitive cohomology class which is dual to the 
homology class of the proper arc $c= c_1$ in $S$ (see Figure~\ref{fig_nbraid}(1) for $c_1$). 

From Figure~\ref{fig_magic}(1), 
one sees that 
the induced h $\psi_*= \beta_*: H_1(D_3; {\Bbb Z}) \rightarrow H_1(D_3; {\Bbb Z})$ 
maps  $t_1, t_2, t_3 \in H_1(D_3; {\Bbb Z})$ to $t_1$, $t_3$, $t_2$ respectively, 
where the set of $t_i$'s is a natural basis of $ H_1(D_n; {\Bbb Z})$ (see Section~\ref{subsection_fibered}). 
This tells us that $\xi_0$ is fixed by  $\psi$. 
Figure~\ref{fig_magic}(2) illustrates the ${\Bbb Z}$-cover $\widetilde{S}$ corresponding to $\xi_0$. 
We consider the {\it canonical} lift $\widetilde{\psi}: \widetilde{S} \rightarrow \widetilde{S} $ of $\psi$ 
which means that $\widetilde{\psi}$ fixes the preimage $p^{-1}(\partial D)$ of the (outer) boundary of the $3$-punctured disk pointwise. 
(In Figure~\ref{fig_3braid_cover}(1)(2), the set $p^{-1}(\partial D) \cap \Sigma_i$ is thickened.) 
We set $\widetilde{c}_{(i)} = \Sigma_{i-1} \cap \Sigma_i$ which is a connected component of the preimage $p^{-1}(c)$ of $c$
(see Figure~\ref{fig_magic}(2)). 
In other words, 
$\widetilde{c}_{(i)} $ and $\widetilde{c}_{(i+1)}$ bound the copy $\Sigma_i$. 
To see the image $\widetilde{\psi} (\Sigma_i)$ of $\Sigma_i$ under $\widetilde{\psi}$, 
we consider $\widetilde{\psi} (\widetilde{c}_{(i)}) $ and $\widetilde{\psi} (\widetilde{c}_{(i+1)})$ 
which are determined by the proper arc $\psi(c)= \beta(c)$ (see Figure~\ref{fig_magic}(3)). 
Observe (from Figure~\ref{fig_3braid_cover}(1) and (2)) that 
$$\widetilde{\psi}(\Sigma_i)  \subset \Sigma_{i-1} \cup \Sigma_i \hspace{2mm} \mbox{and} \hspace{2mm}  
\widetilde{\psi}^{-1}(\Sigma_i)  \subset \Sigma_{i} \cup \Sigma_{i+1}. $$
Hence for each $n \ge 0$ 
$$h^n \widetilde{\psi} (\Sigma_i) 
 \subset \Sigma_{i-1 +n} \cup \Sigma_{i + n} \hspace{1mm} \mbox{and} \hspace{1mm} 
 (h^n \widetilde{\psi})^{-1}(\Sigma_i) = h^{-n}\widetilde{\psi}^{-1} (\Sigma_i) 
 \subset  \Sigma_{i -n} \cup \Sigma_{i- n+1}.$$
For $\ell >0$, we have 
\begin{eqnarray*}
(h^n \widetilde{\psi})^{\ell} (\Sigma_i) 
 &\subset& \Sigma_{i-\ell+ \ell n} \cup \cdots \cup \Sigma_{i-1 +\ell n } \cup \Sigma_{i+ \ell n}, 
 \\
 (h^n \widetilde{\psi})^{-\ell} (\Sigma_i) 
  &\subset& \Sigma_{i - \ell n} \cup \Sigma_{i - \ell n +1} \cup \cdots \cup \Sigma_{i- \ell n + \ell}.  
\end{eqnarray*}
Notice that if we fix $n \ge 2$, then 
$(h^n \widetilde{\psi})^{\pm \ell} (\Sigma_i)  \cap \Sigma_i = \emptyset$ for each $\ell >0$, and hence 
$R_n = \widetilde{S}/\langle h^n\widetilde{\psi} \rangle$ is a surface. 
In fact $R_n$ is a disk with $2n$ punctures, and hence we can think of $R_n$ as a sphere with $2n+1$ punctures
(see Figures~\ref{fig_3braid_cover} and \ref{fig_rn}). 
Note that one of the punctures of $R_n$, say $p_{\infty}$, comes from the preimage of the boundary of the disk 
under the projection $p: \widetilde{S} \rightarrow S = D_3$. 
By Theorem~\ref{thm:McMullen}, we know $h^{-1}$ descends to the monodromy $\psi_n$, and 
we see that $\psi_n$ maps $p_{\infty}$ to itself. 
Thus $\psi_n \in \mathrm{Mod}(D_{2n})$. 
By Theorem~\ref{thm:main}, we have 
$\ell_{\mathcal{C}}(\psi_n) \le C/n^2$ for some constant $C$, 
and hence $L_{\mathcal{C}}(\Mod(D_{2n}))  \le C/n^2$. 

\begin{figure}[tp]
\centering
\includegraphics[width=3.5in]{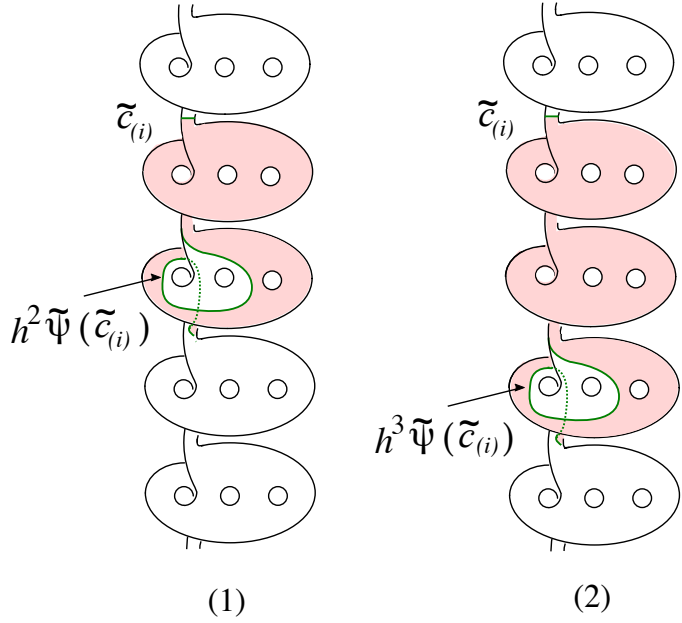}
\caption{(1) Shaded region  descends to 
$R_2  \simeq S_{0,5}$. See also Figure~\ref{fig_3braid_cover}. 
(2) Shaded region  descends to $R_3 \simeq S_{0,7}$. 
(Note that $[\widetilde{c}_{(i)}]= [h^n \widetilde{\psi}(\widetilde{c}_{(i)})]$ in $R_n$.)} 
\label{fig_rn}
\end{figure}

For the case where the number of the punctures of $D_n$ is odd, 
we turn to the pseudo-Anosov braid $\phi= \beta^2 \in B_3$. 
The hyperbolic fibered $3$-manifold $M_{\phi}$ has a fiber $S= D_3$ with monodromy $\phi$. 
The dual to $c= c_1$ is the primitive cohomology class fixed by  $\phi$. 
Consider the ${\Bbb Z}$-cover $\widetilde{S}$ corresponding to this cohomology class. 
Let $\widetilde{\phi}= (\widetilde{\psi})^2: \widetilde{S} \rightarrow \widetilde{S}$ 
be the canonical lift of $\phi$ as before. 
By using the proper arc $\phi(c)= \beta^2(c)$ (see Figure~\ref{fig_magic}(3)), 
we see where each copy $\Sigma_i $ maps on $\widetilde{S}$ under $\widetilde{\phi}$. 
We use  the same argument as above replacing $\widetilde{\psi}$ with $\widetilde{\phi}= (\widetilde{\psi})^2$, 
and construct a surface $\widetilde{S}/\langle h^n \widetilde{\phi} \rangle$ concretely. 
Then we find that this surface is a sphere with $2n+2$ punctures which is a fiber of $M_{\phi}$ for $n$ large. 
Also we see that $\phi_n$ fixes one of the  punctures of the fiber (which comes form the preimage of the boundary of the disk). 
Thus $\phi_n \in \mathrm{Mod}(D_{2n+1})$. 
By Theorem~\ref{thm:main}, we have 
$\ell_{\mathcal{C}}(\phi_n) \le C'/n^2$ for some constant $C'>0$. 
This tells us that  $L_{\mathcal{C}}(\Mod(D_{2n+1})) \le C'/n^2$. 
This completes the proof.
\begin{figure}[tp]
\labellist 
\pinlabel $\ell$ at 128 38
\pinlabel $m$ at 108 90
\endlabellist
\centering
\includegraphics[width=3in]{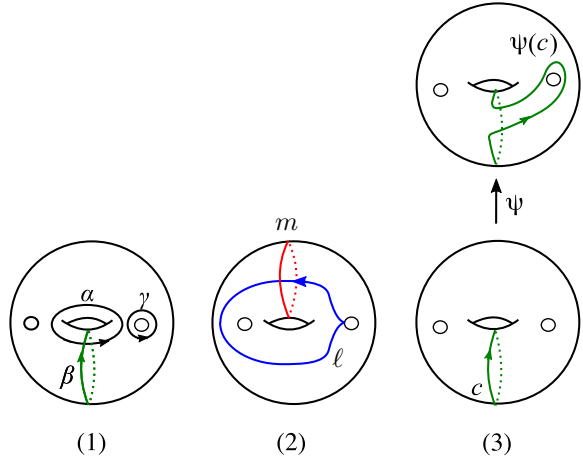}
\caption{Two small circles indicate punctures of $S_{1,2}$. 
(1) A basis $\alpha, \beta, \gamma \in H_1(S_{1,2}; {\Bbb Z})$. 
(2)  $m, \ell$  in $S_{1,2}$. 
(3) Image of $c$ under $\psi= T_m^{-1}  f_{\ell}$.} 
\label{fig_whitehead}
\end{figure}
\end{proof}

\begin{proof}[Proof of Theorem~\ref{thm:sphere-torus}(2)] 
We first consider the case where the number of punctures is odd. 
Let $L_W$ be the Whitehead link  in $S^3$. 
The complement $S^3 \setminus L_W$ is a fibered hyperbolic $3$-manifold with a fiber $S_{1,2}$. 
Consider its pseudo-Anosov monodromy $\psi$ defined on the fiber $S_{1,2}$
(see \cite[Appendix~B]{KinRolfsen16} for more details),
and we use a basis $\alpha, \beta, \gamma \in H_1(S_{1,2}; {\Bbb Z})$ as in Figure~\ref{fig_whitehead}(1). 
Let $m$ be a simple closed curve in $S_{1,2}$, 
and let $\ell$ be an oriented loop based at one of the punctures of  $S_{1,2}$ as in Figure~\ref{fig_whitehead}(2). 
Let $c$ be a representative of the generator $\beta \in H_1(S_{1,2}; {\Bbb Z})$ as in Figure~\ref{fig_whitehead}(3). 
We set  $\psi = T_m^{-1}  f_{\ell} \in \mathrm{Mod}(S_{1,2})$ 
where $f_{\ell}$ is the mapping class which represents the  point-pushing map along $\ell$
(see Figure~\ref{fig_whitehead}(3)). 
Then $\psi$ is the monodromy of a fibration on $S^3 \setminus L_W$, i.e.,
$M_{\psi}$ is homeomorphic to $S^3 \setminus L_W$. 
In particular $\psi$ is pseudo-Ansosov since $S^3 \setminus L_W$ is hyperbolic. 
Observe that the induced map $\psi_*: H_1(S_{1,2}; {\Bbb Z}) \rightarrow  H_1(S_{1,2}; {\Bbb Z}) $ 
sends $a, \beta$ and $\gamma$ to 
$ \alpha - \beta - \gamma$, $ \beta + \gamma$ and $\gamma$ respectively. 
Then the cohomology class $\xi_0 \in H^1(S_{1,2}; {\Bbb Z})$ which is dual to $c $, is primitive and fixed by $\psi$.  
We consider the ${\Bbb Z}$-cover $\widetilde{S}$ over $S= S_{1,2}$ corresponding to $\xi_0$, 
and we take a lift $\widetilde{\psi}: \widetilde{S} \rightarrow \widetilde{S}$ such that 
$\widetilde{\psi}(\Sigma_i) \subset \Sigma_{i-1} \cup \Sigma_i $ 
(see Figures~\ref{fig_torus_cover} and \ref{fig_whitehead}(3)). 
By the same argument as in the proof of Theorem~\ref{thm:sphere-torus}(1), we verify that 
$R_n$ is a torus with $2n+1$ punctures if $n \ge 2$. 
By Theorem~\ref{thm:main}, we conclude that   $L_{\mathcal{C}}(\Mod(S_{1,2n+1})) \le C/n^2$ for some constant $C>0$.

\begin{figure}[t]
\centering
\includegraphics[width=2.5in]{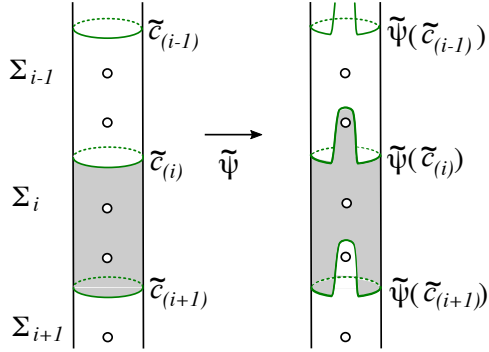}
\caption{A lift $\widetilde{\psi}: \widetilde{S} \rightarrow  \widetilde{S}$ of $\psi$ 
with $\widetilde{\psi}(\Sigma_i) \subset \Sigma_{i-1} \cup \Sigma_i $. 
The regions of $\Sigma_i$ and  $\widetilde{\psi}(\Sigma_i)$ are shaded.} 
\label{fig_torus_cover}
\end{figure}

\begin{figure}[t!]
\centering
\includegraphics[width=2.5in]{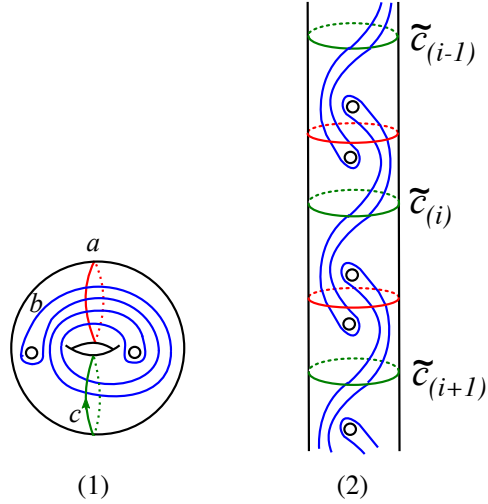}
\caption{(1) Simple closed curves $a,b$ in $S_{1,2}$. 
 (2) ${\Bbb Z}$-cover $\widetilde{S}$ over $S= S_{1,2}$ corresponding to the dual of $c$.} 
\label{fig_tsai}
\end{figure}

We turn to the case where the number of punctures is even. 
Let $a$ and $b$ be simple closed curves in $S_{1,2}$ as in Figure~\ref{fig_tsai}(1), and 
let $c$ be as before, i.e., $\beta= [c]$. 
Consider $\psi = T_b^{-1} T_a \in \mathrm{Mod}(S_{1,2})$ which is pseudo-Anosov by Penner's construction. 
The induced map $\psi_*$ maps a basis $a,\beta$ and $\gamma$ of $H_1(S_{1,2}; {\Bbb Z})$ to $\alpha+ \beta+ \gamma$, $\beta$, and $\gamma$, respectively. 
Thus $\psi$ fixes a primitive cohomology class $\xi_0 \in H^1(S_{1,2}; {\Bbb Z})$ which is dual to $c$. 
Consider  the ${\Bbb Z}$-cover $\widetilde{S}$ over $S$ corresponding to $\xi_0$ (Figure~\ref{fig_tsai}(2)) 
and pick a lift of $\widetilde{\psi}: \widetilde{S} \rightarrow \widetilde{S}$ of $\psi$. 
We can apply Theorem~\ref{thm:main} for 
the fiber $(S_{1,2}, \psi)$ of the mapping torus $M_{\psi}$  
together with $\xi_0 \in H^1(S_{1,2}; {\Bbb Z})$ fixed by $\psi$.  
Theorem~\ref{thm:McMullen} says that for all $n$ sufficiently large,  $R_n$ is a fiber of  $M_{\psi}$. 
In this case  $R_n$ is a torus with $2n+ n_0$ punctures, 
where $n_0 $ is an even number which  depends on the choice of the lift $\widetilde{\psi}$. 
By Theorem~\ref{thm:main} we conclude that  $L_{\mathcal{C}}(\Mod(S_{1,2n})) < C'/n^2$ for some constant $C'>0$. 
This completes the proof.  
\end{proof}

\section{Proof of Theorem~\ref{prop:upperbound}}
This section includes the proof of Theorem~\ref{prop:upperbound}.

\begin{figure}[t]
\centering
\includegraphics[width=3in]{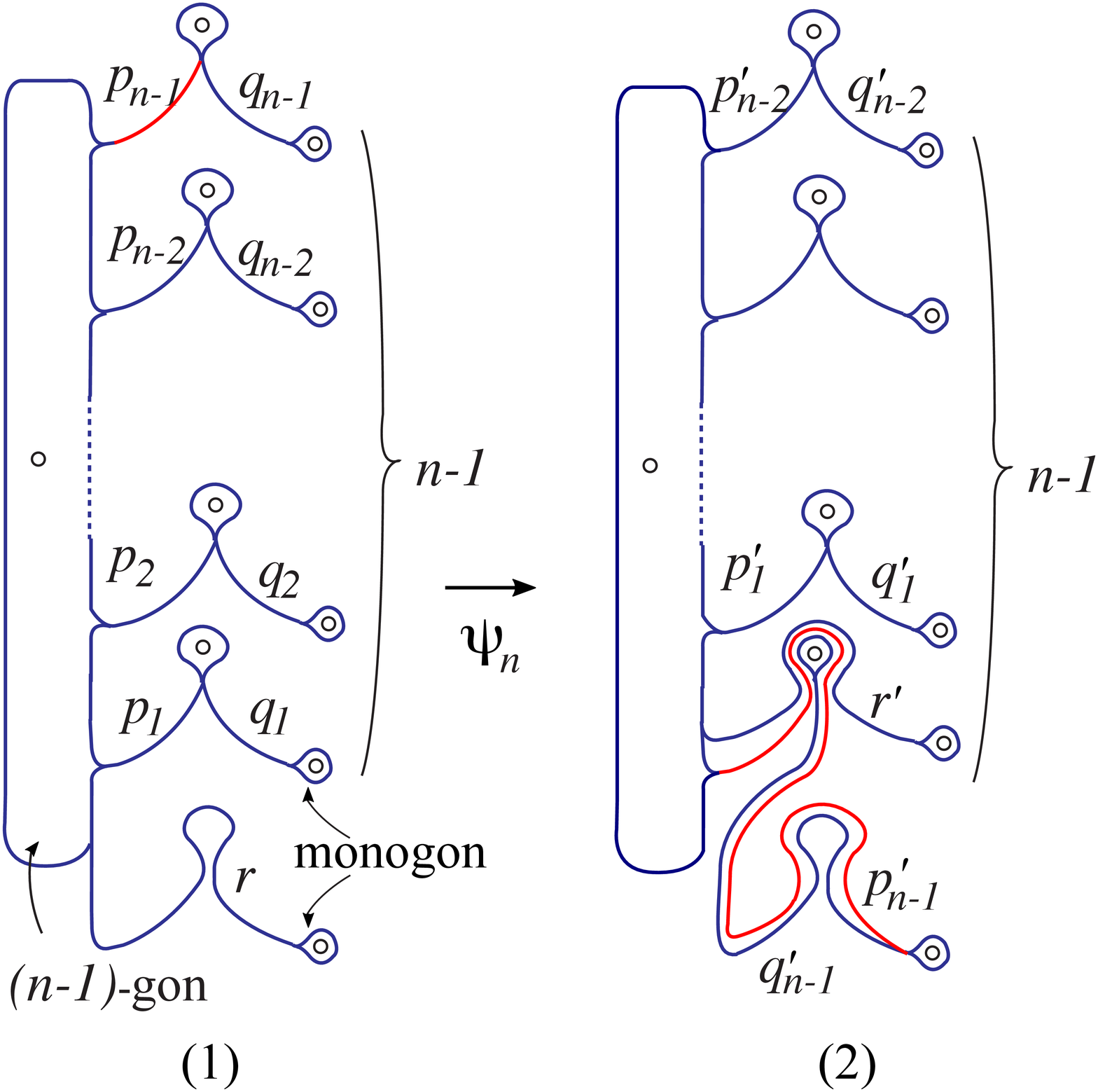}
\caption{Small circles indicate punctures of $S_{0,2n+1}$. 
(1) Train track $\tau_n$. (2) $\psi_n(\tau_n)$, where $e'= \psi_n(e)$. 
(The puncture $p_{\infty}$ is not drawn here.)} 
\label{fig_ttrackbraid}
\end{figure}

In the proof of Theorem~\ref{thm:sphere-torus}(1), 
we used the hyperbolic fibered $3$-manifold 
$M_{\beta}= M_{\sigma_1^{-2} \sigma_2}$, the so-called  {\it magic manifold} 
and its double cover $M_{\beta^2}$, 
depending on the parity of the number of punctures in the disk. 
Here we only use $M_{\beta}$ 
and a sequence $(R_n, \psi_n)$ of the fibers $R_n= D_{2n}$ of $M_{\beta}$ 
 with the monodromy $\psi_n$ for $n \ge 2$ 
as in the proof of Theorem~\ref{thm:sphere-torus}(1). 
{\it Train tracks} play an important role in the proof. 
Terminology related to train tracks can be found in \cite{BestvinaHandel95} or \cite{FarbMargalit12}
for example.

We think of $R_n$ as a sphere with $2n+1$ punctures. 
An invariant train track $\tau_n$ and a train track representative 
$\mathfrak{p}_n: \tau_n \rightarrow \tau_n$ 
of $\psi_n: S_{0,2n+1} \rightarrow S_{0,2n+1}$ 
are studied in \cite[Example~4.6]{Kin15}. 
Figure~\ref{fig_ttrackbraid} shows the train track $\tau_n \subset S_{0,2n+1}$ 
and its image $\psi_n(\tau_n)$. 
Each of the monogon components of $S_{0,2n+1} \setminus \tau_n$ 
(bounded by loop edges of $\tau_n$) contains a puncture of $S_{0,2n+1}$, 
the $(n-1)$-gon of $S_{0,2n+1} \setminus \tau_n$ contains another puncture, 
and the other connected component of $S_{0,2n+1} \setminus \tau_n$ contains the other puncture 
$p_{\infty}$ in the proof of Theorem~\ref{thm:sphere-torus}(1). 
Recall that 
$\psi_n$ maps $p_{\infty}$ to itself. 
Figure~\ref{fig_digraph} gives the directed graph $\Gamma_n$ of 
 $\mathfrak{p}_{n}: \tau_n \rightarrow \tau_n$ for $n \ge 3$. 
 The set of vertices of $\Gamma_n$ equals the set of  non-loop edges 
 $r$, $p_1$, $q_1$, $\cdots, p_{n-1}$, $q_{n-1}$  of $\tau_n$ 
 as shown in Figure~\ref{fig_ttrackbraid}.
 The edges of $\Gamma_n$ tell the locations of  
 $\mathfrak{p}_n(e)$, $\mathfrak{p}^2_n(e)$, $\mathfrak{p}^3_n(e),\cdots $ 
 in $S_{0,2n+1}$ 
 for each non-loop edge $e$ of $\tau_n$. 
More precisely, $j$ edges of $\Gamma_n$ running from the vertex $e$ to the vertex $e'$  mean that 
 $\mathfrak{p}_n(e)$ passes through the edge $e'$ of $\tau_n$ $j$ times. 
 One can construct  $\Gamma_n$ 
 viewing $\psi_n(\tau_n)$ and $\tau_n$. 
 The ``vertical" consecutive edges of $\Gamma_n$ in Figure~\ref{fig_digraph} 
 reveal the dynamics of $\psi_n: S_{0, 2n+1} \rightarrow S_{0,2n+1}$  
 which is just like a translation on a ``big" subsurface of $S_{0,2n+1}$. 

We first prove the following.

\begin{prop}
\label{prop_bound-sphere}
For $n \ge 4$, we have 
$$L_{\mathcal{C}}(\mathrm{Mod}(D_{2n-1})) \le \frac{1}{n^2-4n+2}\hspace{2mm}  \ \mbox{and}\hspace{2mm} 
L_{\mathcal{C}}( \mathrm{Mod}(D_{2n})) \le \frac{1}{n^2-4n+2}.$$
\end{prop}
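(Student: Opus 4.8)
The plan is to reduce the estimate, via Lemma~\ref{lemma:distance}, to a purely combinatorial statement about the iterates of a single curve under $\psi_n$, and to read that statement off the directed graph $\Gamma_n$. Fix $R_n\simeq S_{0,2n+1}$ with monodromy $\psi_n$, the invariant train track $\tau_n\subset S_{0,2n+1}$ and its train track representative $\mathfrak{p}_n\colon\tau_n\to\tau_n$ from \cite[Example~4.6]{Kin15}, together with the directed graph $\Gamma_n$ of Figure~\ref{fig_digraph}. Because $\tau_n$ is $\psi_n$-invariant, any essential simple closed curve $\alpha$ carried by $\tau_n$ has all of its iterates $\psi_n^{j}(\alpha)$ carried by $\tau_n$, and the set of non-loop edges of $\tau_n$ traversed by $\psi_n^{j}(\alpha)$ is exactly the set of vertices of $\Gamma_n$ reachable by a directed path of length $j$ from the support of $\alpha$ (the loop edges of $\tau_n$ are tracked separately, since each bounds a monogon around one puncture). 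The key point is the disjointness criterion: if, for some $j$, the non-loop edges crossed by $\psi_n^{j}(\alpha)$ together with the loop edges it surrounds span a proper connected sub-train-track of $\tau_n$ whose regular neighborhood is disjoint from $\alpha$, then $\psi_n^{j}(\alpha)$ and $\alpha$ are disjoint, hence $d_{\mathcal C}\bigl(\alpha,\psi_n^{j}(\alpha)\bigr)=1$.

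Next I would choose $\alpha$ to be a curve carried by a small connected sub-track sitting at one extreme end of the ``vertical chain'' $r,p_1,q_1,\dots,p_{n-1},q_{n-1}$ of $\tau_n$ in Figure~\ref{fig_ttrackbraid} (for instance a curve enclosing the punctures in the first two monogons), positioned so that it is disjoint from the puncture $p_\infty$. The heart of the proof is then a bookkeeping on $\Gamma_n$ using its ``vertical translation'' structure: away from its two ends, $\Gamma_n$ is a shift $p_i\mapsto p_{i\pm1}$, $q_i\mapsto q_{i\pm1}$, so the support of $\psi_n^{j}(\alpha)$ marches along the chain, advancing one position per iteration and widening only by a bounded amount each time it reflects off an end. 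Carrying out this count shows that the effective length of the chain is $n-2$ and that the support of $\psi_n^{j}(\alpha)$ stays inside a proper sub-track disjoint from $\alpha$ for every $j$ up to $M:=(n-2)^2-2=n^2-4n+2$; thus $d_{\mathcal C}\bigl(\alpha,\psi_n^{M}(\alpha)\bigr)=1$, and Lemma~\ref{lemma:distance} gives $\ell_{\mathcal C}(\psi_n)\le 1/(n^2-4n+2)$, i.e.\ $L_{\mathcal C}(\mathrm{Mod}(D_{2n}))\le 1/(n^2-4n+2)$. The bound for $\mathrm{Mod}(D_{2n-1})\simeq\mathrm{Mod}(S_{0,2n})$ is obtained by running the entirely parallel argument on the family of odd-punctured fibers produced in the proof of Theorem~\ref{thm:sphere-torus}(1): the relevant train track there is governed by the square of $\mathfrak{p}_n$, its effective chain length is again $n-2$, and the same count yields the threshold $M=(n-2)^2-2=(n-1)^2-2(n-1)-1=n^2-4n+2$. (If the natural base object on that surface is an arc rather than a curve, one works in $\mathcal{AC}$ and invokes the $2$-bilipschitz comparison recalled in Step~3 of the proof of Theorem~\ref{thm:main}.)

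The main obstacle is the count in the second paragraph: one must describe precisely how the edge-support of $\psi_n^{j}(\alpha)$ evolves on $\Gamma_n$ — in particular how far it spreads on each reflection at the two ``special'' ends (the $(n-1)$-gon side and the $p_\infty$ side) — and verify that it first fails to be disjoint from $\alpha$ only at step $n^2-4n+2$, keeping careful track of the small additive corrections that pin the threshold at $(n-2)^2-2$ rather than a nearby value. The remaining ingredients — invariance of $\tau_n$, the sub-track disjointness criterion, the identification $D_{m-1}\simeq S_{0,m}$, and the passage between $\mathcal{AC}$ and $\mathcal C$ — are routine given what is already available.
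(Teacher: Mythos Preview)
Your approach for $D_{2n}$ is essentially the paper's: choose an arc $c\subset\mathcal N(rp_1q_1)$ (or its boundary curve $\alpha$), track $\psi_n^j(c)$ through the fibered neighborhoods via $\Gamma_n$, and exhibit a large $M$ with $\psi_n^M(c)$ disjoint from $c$. One detail in your description is wrong, however: the support of $\psi_n^j(c)$ does \emph{not} stay disjoint from $c$ for all $j\le M$. The actual pattern is that $\psi_n^{n-2}(c)\subset\mathcal N(p_{n-2}q_{n-2}p_{n-1}q_{n-1})$, and then two further iterates bring it back through $r$: one has $\psi_n^{n}(c)\subset\mathcal N(rp_1q_1p_2q_2)$, which \emph{does} meet $c$. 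In general $\psi_n^{(k-1)n-k}(c)\subset\mathcal N(p_{n-k}q_{n-k}\cdots p_{n-1}q_{n-1})$, and the largest $k$ for which this avoids $p_1,q_1$ is $k=n-2$, giving $M=(n-3)n-(n-2)=n^2-4n+2$. Your formula $M=(n-2)^2-2$ is numerically the same, but the ``for every $j\le M$'' claim must be dropped; only $j=M$ is needed, and Lemma~\ref{lemma:distance} requires no more.

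For $D_{2n-1}$ there is a genuine gap. You propose to switch to the odd-puncture family of fibers of $M_{\beta^2}$ from the proof of Theorem~\ref{thm:sphere-torus}(1) and assert that ``the relevant train track there is governed by the square of $\mathfrak p_n$'' with the same effective chain length. But no invariant train track for those monodromies $\phi_n$ has been produced, those monodromies are not squares of the $\psi_n$'s (they live on different surfaces), and there is no reason the precise constant $n^2-4n+2$ should emerge. The paper's route is entirely different and much shorter: stay on $S_{0,2n+1}$ and \emph{fill the puncture lying in the $(n-1)$-gon} of $S_{0,2n+1}\setminus\tau_n$. Since $n-1\ge 3$, the filled complementary region is still an admissible polygon, so $\tau_n$ extends to a train track $\overline{\tau}_n$ on $S_{0,2n}$ and $\psi_n$ extends to a pseudo-Anosov $\overline{\psi}_n\in\mathrm{Mod}(S_{0,2n})$ still fixing $p_\infty$, hence lying in $\mathrm{Mod}(D_{2n-1})$. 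The non-loop edges --- and therefore the directed graph $\Gamma_n$ --- are unchanged, so the \emph{identical} count above gives $d_{\mathcal C}\bigl(\overline{\alpha},\overline{\psi}_n^{\,n^2-4n+2}(\overline{\alpha})\bigr)=1$, and Lemma~\ref{lemma:distance} finishes.
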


\begin{figure}[t]
\centering
\includegraphics[width=1.5in]{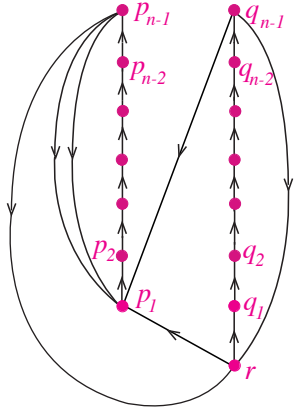}
\caption{Directed graph $\Gamma_n$.} 
\label{fig_digraph}
\end{figure}

\begin{figure}[t]
\centering
\includegraphics[width=3.5in]{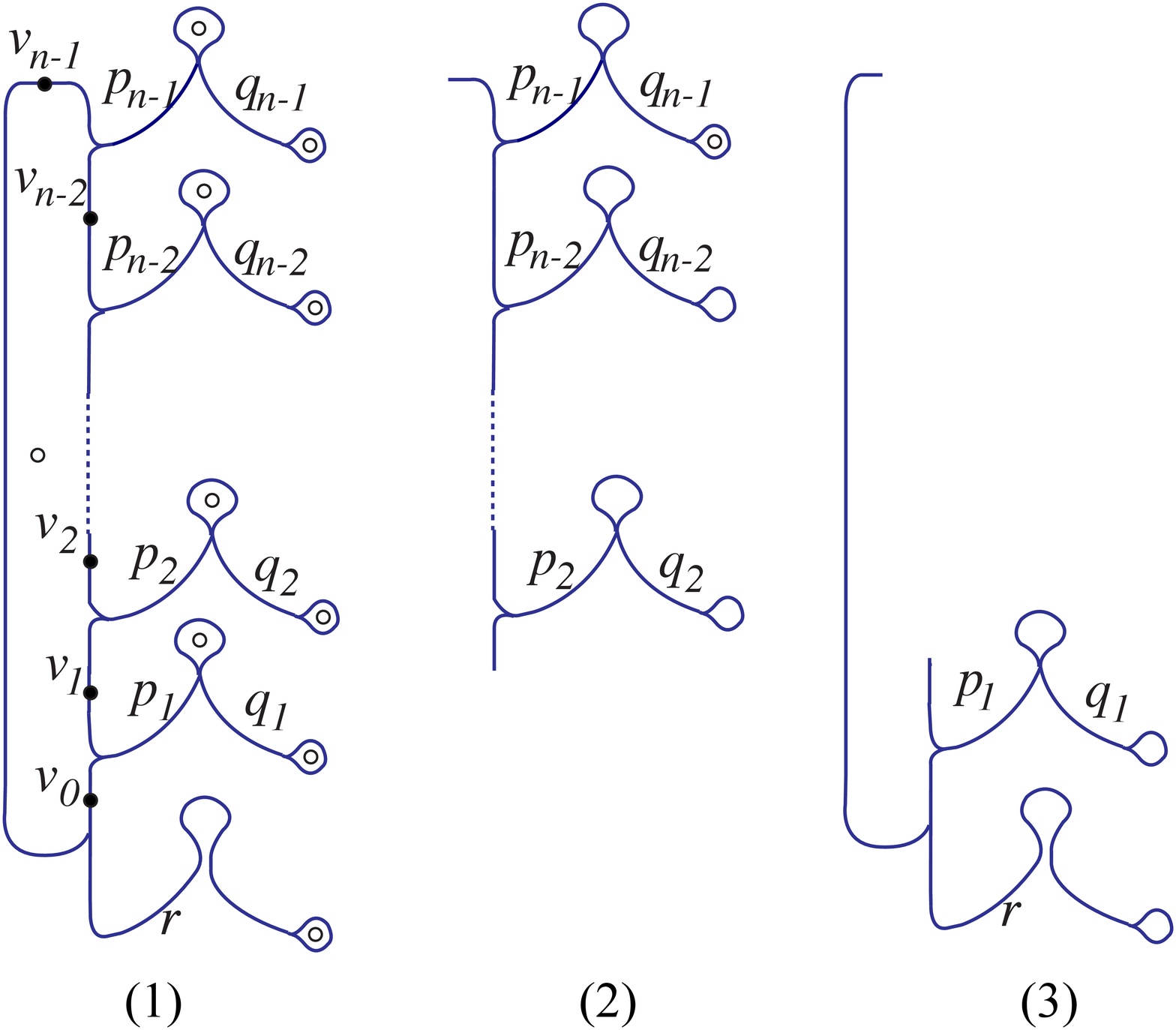}
\caption{(1) Points $v_0, v_1, \cdots, v_{n-1}$. 
(2) $\tau(2,n-1) \subset \mathcal{N}(p_2 q_2 \cdots p_{n-1} q_{n-1})$. 
(3) $\tau(1) \subset \mathcal{N}(r p_1 q_1)$.} 
\label{fig_subtrain}
\end{figure}

\begin{figure}[t]
\centering
\includegraphics[width=3.5in]{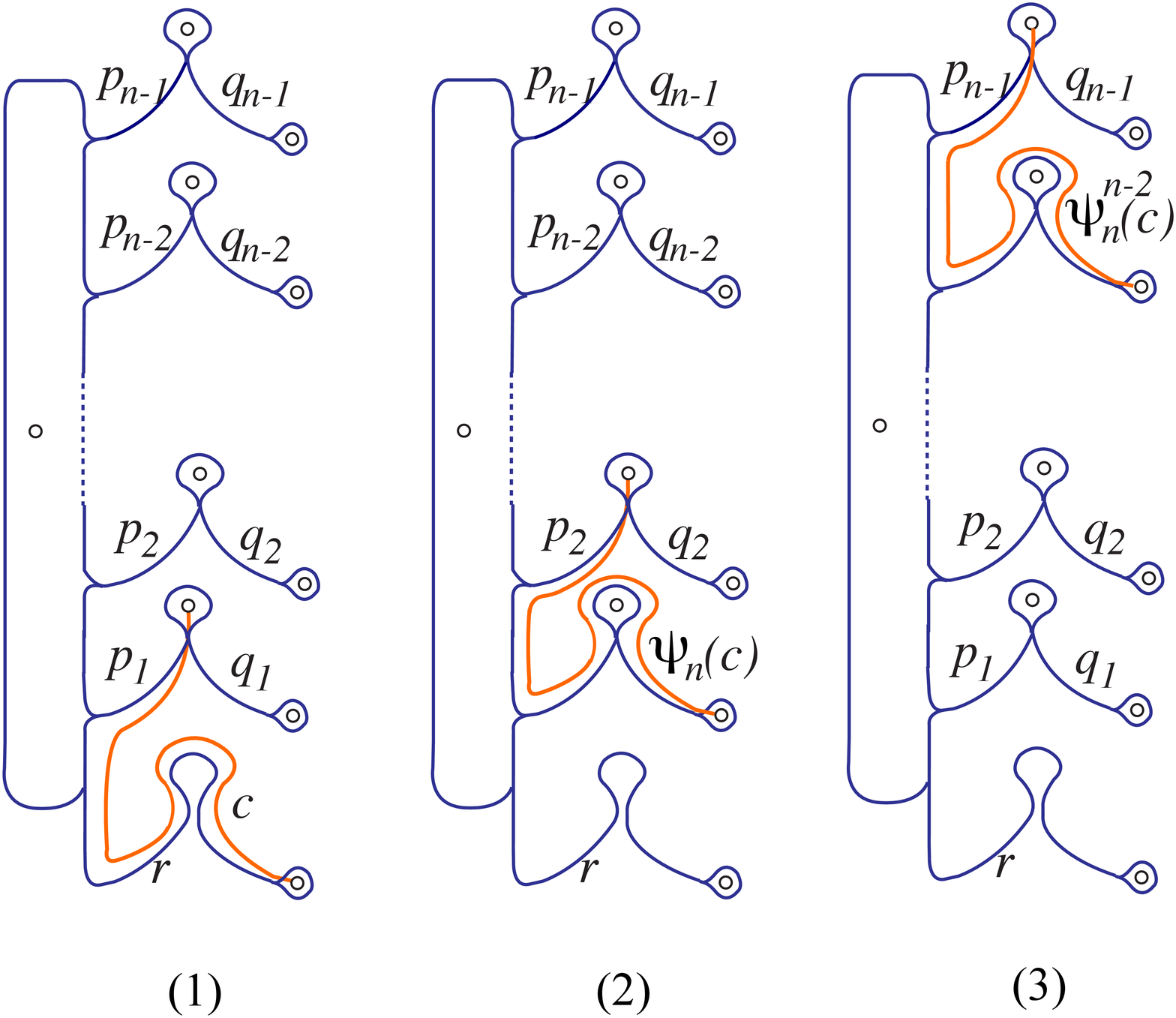}
\caption{(1) $c \subset \mathcal{N}(r p_1 q_1)$. 
(2) $\psi_n(c) \subset \mathcal{N}(p_1 q_1 p_2 q_2)$. 
(3) $\psi_n^{n-2}(c) \subset \mathcal{N} (p_{n-2} q_{n-2} p_{n-1} q_{n-1})$.} 
\label{fig_arc_curve}
\end{figure}

\begin{proof}
We first prove the latter upper bound. 
We assume $n \ge 4$. 
 Let $\mathcal{N}(\tau_n) \subset S_{0,2n+1}$ be a {\it fibered neighborhood} of $\tau_n$ 
 (see \cite[page~360]{PapadopoulosPenner87} for the definition) 
 equipped with a retraction $\mathcal{N}(\tau_n) \searrow \tau$. 
 For a connected subset $\tau' \subset \tau_n$, 
 we define a {\it fibered neighborhood}  $\mathcal{N}(\tau')$ of $\tau'$  as follows. 
 $$\mathcal{N}(\tau') = \mathcal{N}(\tau_n) \cap U(\tau'),$$
 where $U(\tau')$ is a small neighborhood of $\tau'$  in the $2$-sphere $S^2$. 
We take $n$ points $v_0, v_1, v_2, \cdots, v_{n-1} \subset \tau_n$, 
each of which lies on an edge of  the $(n-1)$-gon, 
see  Figure~\ref{fig_subtrain}(1).  
For $1 \le i < j \le n-1$, 
let $\tau(i,j)$ be the connected component of $\tau_n \setminus \{v_{i-1}, v_j\}$ 
containing $p_i, q_i, p_{i+1}, q_{i+1}, \cdots, p_{j}, q_{j}$
(see Figure~\ref{fig_subtrain}(2)). 
We consider its fibered neighborhood $\mathcal{N}(\tau(i,j))$, and we set 
$$\mathcal{N}(p_i q_i p_{i+1} q_{i+1} \cdots p_{j} q_{j}) = \mathcal{N}(\tau(i,j)).$$
For $1 \le j \le n-2$, 
let $\tau(j)$ be the connected component of $\tau_n \setminus \{v_j, v_{n-1}\}$ 
containing $r, p_1, q_1, \cdots, p_j, q_j$
(see Figure~\ref{fig_subtrain}(3)). 
Let 
$$\mathcal{N}(r p_1 q_1 \cdots p_j q_j) = \mathcal{N}(\tau(j)).$$
The notation $\mathcal{N}(r p_1 q_1 \cdots p_j q_j)$ tells a property that 
it contains  $r, p_1, q_1, \cdots, p_j, q_j$. 
The same thing holds for $\mathcal{N}(p_i q_i p_{i+1} q_{i+1} \cdots p_{j} q_{j})$.

We take an essential arc $c$ connecting the two punctures as in Figure~\ref{fig_arc_curve}(1). 
Then $c$ is carried by $\tau_n$. 
Notice that if $i \ge 2$, then 
$\mathcal{N}(p_i q_i p_{i+1} q_{i+1} \cdots p_{n-1} q_{n-1})$ is disjoint from $c$. 
Since $c \subset \mathcal{N}(r p_1 q_1)$, we have 
\begin{eqnarray*}
\psi_n(c) &\subset& \mathcal{N}(p_1 q_1 p_2 q_2), 
\\
\psi_n^2(c) &\subset& \mathcal{N}(p_2 q_2 p_3 q_3), 
\\
&\vdots&
\\
\psi_n^{1+ (n-3)}(c)= \psi_n^{n-2}(c) &\subset& \mathcal{N} (p_{n-2} q_{n-2} p_{n-1} q_{n-1})
\end{eqnarray*}
(see Figures~\ref{fig_digraph} and \ref{fig_arc_curve}).
Observe that  $\psi_n^2 (\psi_n^{n-2}(c)) = \psi_n^n(c) \subset \mathcal{N}(r p_1 q_1 p_2 q_2)$. 
We have 
\begin{eqnarray*}
\psi_n^{n+1} (c) &\subset& \mathcal{N}(p_1 q_1 p_2 q_2 p_3 q_3), 
\\
&\vdots&
\\
\psi_n^{(n+1)+ (n-4)}(c)= 
\psi_n^{2n-3} (c)  &\subset& \mathcal{N} (p_{n-3} q_{n-3} p_{n-2} q_{n-2} p_{n-1} q_{n-1}).
\end{eqnarray*}
In the same manner, for $2 \le k \le n-2$, we have
$$\psi_n^{(k-1)n-k}(c) \subset \mathcal{N}(p_{n-k} q_{n-k} \cdots p_{n-1} q_{n-1}).$$
When $k=n-2$, 
$$\psi_n^{(n-3)n- (n-2)}(c) = \psi_n^{n^2-4n+2}(c) \subset \mathcal{N} (p_2 q_2 \cdots p_{n-1} q_{n-1}).$$
Hence clearly we have 
$$d_{\mathcal{AC}}(c, \psi_n^{n^2-4n+2}(c))=1.$$
If we consider a regular neighborhood of $c$ in $S^2$, 
then we obtain an essential simple closed curve $\alpha$ in $ S_{0,2n+1}$ 
as the boundary of the neighborhood in question.  
Notice that  $\alpha$ is also carried by $\tau_n$ and $\alpha  \subset \mathcal{N}(r p_1 q_1)$. 
The above argument shows that $\psi_n^{n^2-4n+2}(\alpha) \subset \mathcal{N}(p_2 q_2  \cdots p_{n-1} q_{n-1})$ and 
$\alpha $ is disjoint from $\psi_n^{n^2-4n+2}(\alpha)$. 
Recall that $\psi_n$ is defined on $R_n= D_{2n}$. 
This together with Lemma~\ref{lemma:distance} implies that 
$$L_{\mathcal{C}}( \mathrm{Mod}(D_{2n})) \le \frac{1}{n^2-4n+2}.$$

To show the former upper bound of 
$ L_{\mathcal{C}}(\mathrm{Mod}(D_{2n-1})$ in the claim, 
we fill the  puncture  in the $(n-1)$-gon of $S_{0,2n+1} \setminus \tau_n$. 
The assumption $n-1 \ge 3$ ensures that  
$\tau_n$ extends to a train track $\overline{\tau}_n$ in $S_{0,2n}$ and 
$\psi_n: S_{0,2n+1} \rightarrow S_{0,2n+1} $ extends to 
$\overline{\psi}_n: S_{0,2n} \rightarrow S_{0,2n}$ which is still pseudo-Anosov. 
In particular 
$\overline{\psi}_n$ maps the puncture $p_{\infty}$ to itself.  
We can think of $\overline{\psi}_n: S_{0,2n} \rightarrow S_{0,2n}$ 
as an element of $\mathrm{Mod}(D_{2n-1})$. 
The train track representative 
$\mathfrak{p}_n: \tau_n \rightarrow \tau_n$ also extends to a train track representative 
$\overline{\mathfrak{p}}_n: \overline{\tau}_n \rightarrow \overline{\tau}_n$ of $\overline{\psi}_n: S_{0,2n} \rightarrow S_{0,2n}$. 
All non-loop edges of $\overline{\tau}_n$ are coming from those of $\tau_n$, and hence 
the directed graph $\overline{\Gamma}_n$ for $\overline{\mathfrak{p}}_n: \overline{\tau}_n \rightarrow \overline{\tau}_n$ 
is the same as $\Gamma_n$ for $\mathfrak{p}_n; \tau_n \rightarrow \tau_n$. 
For the arc $\overline{c}$ and the simple closed curve $\overline{\alpha}$  in $S_{0,2n}$ 
coming from $c$ and $\alpha$ in $S_{0,2n+1}$, respectively,
the above argument tells us  that 
\begin{equation}
\label{equation_disjoint}
d_{\mathcal{AC}} (\overline{c}, \ \overline{\psi}_n^{n^2-4n+2}(\overline{c}))=1 \hspace{2mm} \mbox{and}\hspace{2mm}
d_{\mathcal{C}} (\overline{\alpha},\ \overline{\psi}_n^{n^2-4n+2}(\overline{\alpha}))=1.
\end{equation}
The latter equality in (\ref{equation_disjoint})  with Lemma~\ref{lemma:distance} gives  the desired upper bound. 
\end{proof}

We are now ready to prove Theorem~\ref{prop:upperbound}. 

\begin{proof}[Proof of Theorem~\ref{prop:upperbound}]
By Lemma~\ref{lem_lift} together with either of the equalities for $\overline{\psi}_n: S_{0,2n} \rightarrow S_{0,2n}$
in (\ref{equation_disjoint}),  
we have 
$L_{\mathcal{C}} (\mathcal{H}(S_{n-1})) \le \frac{1}{n^2-4n+2}$ for $n \ge 4$. 
Thus for $g \ge 3$, 
$$L_{\mathcal{C}}(\mathcal{H}(S_g)) \le \frac{1}{(g+1)^2-4(g+1)+2} = \frac{1}{g^2-2g-1}.$$
\end{proof}

\section{Proof of Theorem~\ref{thm:hyphandle}}

\begin{figure}[t]
\centering
\includegraphics[width=4.5in]{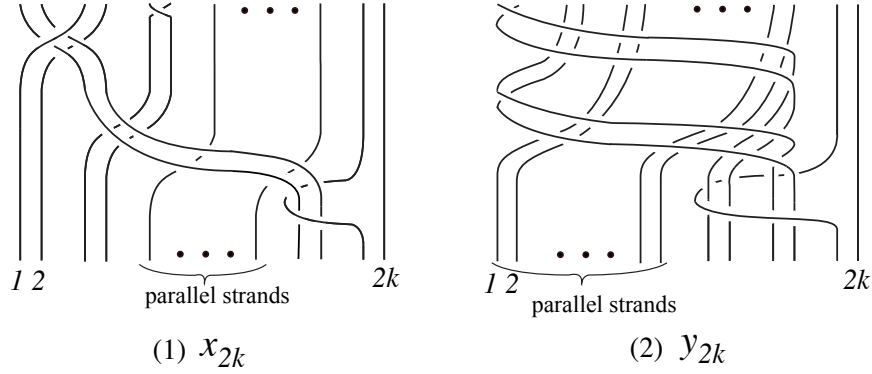}
\caption{(1) $x_{2k} \in SW_{2k}$. (2) $y_{2k} \in SW_{2k}$.} 
\label{fig_z}
\end{figure}

In this section, we finally prove Theorem~\ref{thm:hyphandle}. 

\begin{proof}[Proof of Theorem~\ref{thm:hyphandle}]
The proof is separated into two cases, depending on the parity of the genera. 
First of all we introduce spherical braids $x_{2k}, y_{2k} \in SB_{2k}$ for $k \ge 5$ 
as shown in Figure~\ref{fig_z}. 
It is straightforward to see that they are elements of $SW_{2k}$. 
We define $w_{2k} \in SW_{2k}$ for each $k \ge 5$ as follows. 
\begin{eqnarray*}
w_{4n+8}&=& x_{4n+8}(y_{4n+8})^n \hspace{1.2cm}\mbox{if}\ \hspace{2mm} 2k= 4n+8 \hspace{2mm} \mbox{for some}\ n \ge 1, 
\\
w_{4n+10}&=& (x_{4n+10})^2 (y_{4n+10})^n \hspace{0.5cm}\mbox{if}\ \hspace{2mm} 2k= 4n+10 \hspace{2mm} \mbox{for some}\ n \ge 0.
\end{eqnarray*}
Consider an element  in the Hilden group $SH_{2k}$  corresponding to  $w_{2k}$ 
(see Section~\ref{subsection_subgroup}) 
and its mapping torus $M_{w_{2k}}$.  
In \cite{HiroseKin17} it is shown that 
when $2k= 4n+8$ for $n \ge 1$,  
 $M_{w_{2k}}$ is homeomorphic to the mapping torus $M_w$ of the  element  in $SH_6$ 
 corresponding to the pseudo-Anosov braid $w \in SW_6$ (see Figure~\ref{fig_wicket}(3)). 
In other words,
$M_w$ is hyperbolic and it has a fiber $S_{0, 2k}$ with pseudo-Anosov monodromy $w_{2k}$ when $2k= 4n+8$. 
We claim that a sequence of fibers $(S_{0, 4n+10}, w_{4n+10})$ of $M_w$ comes from a fibered 3-manifold
as in Theorem~\ref{thm:main}. 
More precisely, 
if we remove the $6$th strand of $w$, then we obtain a spherical braid with $5$ strands. 
Regarding such a braid as the one on the disk, 
we have a $5$-braid, say $\psi \in B_5$. 
Clearly $M_{\psi}$ is homeomorphic to $M_w$. 
We consider a fiber $S= D_5$ with monodromy $\psi$ of the mapping torus $M_{\psi} \simeq M_w$. 
Since $\psi_*$ maps the generator $t_5$ to itself (see the $5$th strand of the braid $w$ in Figure~\ref{fig_wicket}(3)), 
the cohomology class $\xi_0 \in H^1(S; {\Bbb Z})$ which is dual to the proper arc $c= c_5$ is fixed by $\psi$. 
Let $\widetilde{S}$ be the ${\Bbb Z}$-cover of $S$  corresponding to $\xi_0$. 
We consider the canonical lift $\widetilde{\psi}: \widetilde{S} \rightarrow \widetilde{S}$ of $\psi$. 
Then $R_n = \widetilde{S}/\langle h^n\widetilde{\psi} \rangle$ is a fiber of $M_{\psi}$ with monodromy $\psi_n$ for $n$ large. 
In this case, $R_n$ is a sphere with $4n+8$ punctures, and  
we find that the monodromy $\psi_n$ is given by the braid $w_{4n+8} \in SW_{4n+8}$ 
from the argument in \cite[Section~3]{HiroseKin17}.  
By the proof of Theorem~\ref{thm:main}, there exist  $\alpha \in \mathcal{AC}(R_n)^{0}$ and $m \asymp n^2$ such that 
$d_{\mathcal{AC}}(\alpha, (\psi_n)^m (\alpha)) = 1$. 
Notice that a lift $\widehat{\psi}= \widehat{\psi_n}$ of $\psi_n$ 
under the map $q$ is an element of  $\mathcal{H}({\Bbb H}_{2n+3})$ (see Section~\ref{subsection_subgroup}). 
By Lemma~\ref{lem_lift}, 
$\ell_{\mathcal{C}} (\widehat{\psi}) \le 1/m$, which implies
$\ell_{\mathcal{C}} (\widehat{\psi}) \le C/n^2$ for some constant $C>0$. 
Thus we have $L_{\mathcal{C}}(\mathcal{H}({\Bbb H}_{2n+3})) \le  C/n^2$ 
in the case of the odd genus.

To obtain the upper bound $L_{\mathcal{C}}(\mathcal{H}({\Bbb H}_{2n+4})) \le  C'/n^2$ for some $C'>0$ 
in the case of the even genus, 
we take the second power $\psi^2 \in B_5$ of the above $\psi$ and we set $\phi= \psi^2$. 
We consider a fiber $S= D_5$ with monodromy $\phi$ in the mapping torus $M_{\phi}$. 
Note that  $\phi$ fixes the same $\xi_0 \in H^1(S; {\Bbb Z})$. 
Let $\widetilde{S}$ be the ${\Bbb Z}$-cover over $S$ as before and let 
$\widetilde{\phi}= (\widetilde{\psi})^2: \widetilde{S} \rightarrow \widetilde{S}$ which is the canonical lift of $\phi$. 
Now we apply Theorem~\ref{thm:main} for the fiber $(S, \phi)$ of $M_{\phi}$ 
together with $\xi_0$. 
One sees that 
for $n$ large, $ \widetilde{S}/\langle h^n\widetilde{\phi} \rangle$ is a fiber of $M_{\phi}$ which is  the sphere with $4n+10$ punctures. 
 The same argument as  in \cite[Section~3]{HiroseKin17} 
tells us that the monodromy of the fiber $ \widetilde{S}/\langle h^n\widetilde{\phi} \rangle$ 
is described by the braid $w_{4n+10} \in SW_{4n+10}$. 
As in the case of the odd genus, 
we obtain the desired upper bound of $L_{\mathcal{C}}(\mathcal{H}({\Bbb H}_{2n+4}))$. 
This  completes the proof. 
\end{proof}

\vspace{3em}
\bibliographystyle{alpha} 
\bibliography{trans}

\Addresses

\end{document}